\newtheorem{dfn}{Definition}[section]
\newtheorem{thm}[dfn]{Theorem}
\newtheorem{lem}[dfn]{Lemma}
\newtheorem{prop}[dfn]{Proposition}
\newtheorem{constr}[dfn]{Construction}
\newtheorem{prob}[dfn]{Problem}
\newtheorem{Dfn}{Definition}
\newtheorem{claim}[Dfn]{Claim}
\newcommand{\dB}{{\mathcal{B}}}
\newcommand{\dC}{{\mathcal{C}}}
\newcommand{\dG}{{\mathcal{G}}}
\newcommand{\dT}{{\mathcal{T}}}
\newcommand{\1}{{\uppercase\expandafter{\romannumeral1}}}
\newcommand{\2}{{\uppercase\expandafter{\romannumeral2}}}
\newcommand{\3}{{\uppercase\expandafter{\romannumeral3}}}
\newcommand{\4}{{\uppercase\expandafter{\romannumeral4}}}
\begin{document}

\title{Counting critical subgraphs in $k$-critical graphs}

\author{
Jie Ma\thanks{School of Mathematical Sciences, University of Science and Technology of China,
Hefei, Anhui 230026, China. Email: jiema@ustc.edu.cn. Partially supported by NSFC grant 11622110.}
~~~~~
Tianchi Yang\thanks{School of Mathematical Sciences, University of Science and Technology of China,
Hefei, Anhui 230026, China. Email: ytc@mail.ustc.edu.cn.}
}
\maketitle

\begin{abstract}
Gallai asked in 1984 if any $k$-critical graph on $n$ vertices contains at least $n$ distinct $(k-1)$-critical subgraphs.
The answer is trivial for $k\leq 3$.
Improving a result of Stiebitz \cite{S87}, Abbott and Zhou \cite{AZ} proved in 1995 that
for all $k\geq 4$, such graph contains $\Omega(n^{1/(k-1)})$ distinct $(k-1)$-critical subgraphs.
Since then no progress had been made until very recently, Hare \cite{H19} resolved the case $k=4$ by showing that any $4$-critical graph on $n$ vertices contains at least $(8n-29)/3$ odd cycles.

In this paper, we mainly focus on 4-critical graphs and develop some novel tools for counting cycles of specified parity.
Our main result shows that any $4$-critical graph on $n$ vertices contains $\Omega(n^2)$ odd cycles, which is tight up to a constant factor by infinite many graphs.
As a crucial step, we prove the same bound for 3-connected non-bipartite graphs, which may be of independent interest.
Using the tools, we also give a very short proof for the case $k=4$.
Moreover, we improve the longstanding lower bound of Abbott and Zhou to $\Omega(n^{1/(k-2)})$ for the general case $k\geq 5$.
We will also discuss some related problems on $k$-critical graphs in the final section.
\end{abstract}

\section{Introduction}
In this paper, all graphs referred are {\it simple} graphs, unless otherwise specified.
The {\it chromatic number} $\chi(G)$ of a graph $G$ is the minimum number of colors to be assigned to its vertices so that no adjacent vertices receive the same color.
A graph $G$ is called {\it $k$-critical} if it has chromatic number $k$ but every proper subgraph has chromatic number less than $k$.
Note that 3-critical graphs are all odd cycles.

In 1984, Gallai asked the following problem (see Problem 5.9 of \cite{JT} or the discussion in \cite{S87}).
\begin{prob}[Gallai]\label{prob:Gallai}
If $G$ is a $k$-critical graph on $n$ vertices, is it true that $G$ contains $n$ distinct $(k-1)$-critical subgraphs?
\end{prob}

\noindent
This problem is trivial for $k\leq 3$.
From now on, we will assume $k\geq 4$.
For convenience, for each $s\geq 3$ we denote $f_s(G)$ by the number of distinct $s$-critical subgraphs in a graph $G$.
For $s=3$, we will simply write $f(G)$ instead.
Let $G$ be an $n$-vertex $k$-critical graph.
Stiebitz \cite{S87} first proved that $f_{k-1}(G)\geq \log_2n.$
This was improved by Abbott and Zhou \cite{AZ} to $$f_{k-1}(G)\geq ((k-1)!n)^{\frac{1}{k-1}}$$ in 1995 and there has been no further improvement for general $k$ since then.
Very recently, Hare \cite{H19} answered Gallai's problem in the case $k=4$
by showing that every $4$-critical graph on $n$ vertices contains at least $\frac83n-\frac{29}3$ odd cycles.

Our first result improves the general bound of Abbott and Zhou \cite{AZ} for every $k\geq 4$.
\begin{thm} \label{Thm:general k}
For $k\geq 4$, every $k$-critical graph $G$ on $n$ vertices satisfies
$\binom{f_{k-1}(G)}{k-2}\geq e(G)$.
Thus
\begin{equation*}
f_{k-1}(G)\geq ((k-1)!n/2)^{\frac{1}{k-2}}.
\end{equation*}
\end{thm}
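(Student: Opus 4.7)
The plan is to inject the edges of $G$ into the $(k-2)$-subsets of $(k-1)$-critical subgraphs of $G$, which immediately yields $\binom{f_{k-1}(G)}{k-2}\geq e(G)$. The combinatorial heart of the construction is that, given an edge $e$, the $k-2$ subgraphs assigned to $e$ will share only the edge $e$ in common, so the $(k-2)$-subset uniquely identifies $e$.

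For a fixed edge $e=uv\in E(G)$, I would first note that $\chi(G-e)=k-1$: the upper bound is from $G$ being $k$-critical, and the lower bound comes from the fact that any $(k-2)$-coloring of $G-e$ would extend to a $(k-1)$-coloring of $G$ by giving one endpoint of $e$ a fresh color. Fix a proper $(k-1)$-coloring $c$ of $G-e$ with color classes $V_1,\ldots,V_{k-1}$; since $c$ does not properly color $G$, we have $c(u)=c(v)$, say $u,v\in V_1$.

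Next, for each $i\in\{2,\ldots,k-1\}$ I would consider the subgraph $G-V_i$. Observe that $V_i$ is independent in $G$ (it is independent in $G-e$, and the only edge of $G$ missing from $G-e$ is $e$, whose ends lie in $V_1$). Hence any $(k-2)$-coloring of $G-V_i$ would extend to a $(k-1)$-coloring of $G$ by assigning $V_i$ a new color, contradicting $\chi(G)=k$; so $\chi(G-V_i)\geq k-1$. On the other hand, restricting $c$ to $(G-V_i)-e$ uses the $k-2$ colors in $[k-1]\setminus\{i\}$, and giving one endpoint of $e$ a further new color yields a proper $(k-1)$-coloring of $G-V_i$. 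So $\chi(G-V_i)=k-1$, and we may pick a $(k-1)$-critical subgraph $H_i\subseteq G-V_i$. The edge $e$ must lie in $H_i$, for otherwise $H_i\subseteq (G-V_i)-e$, which is $(k-2)$-colorable by the restriction of $c$, contradicting $\chi(H_i)=k-1$.

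The key observation is that any edge $f$ lying in all of $E(H_2),\ldots,E(H_{k-1})$ has both endpoints outside $V_2\cup\cdots\cup V_{k-1}$, hence inside $V_1$, and therefore $f=e$ because $V_1$ is independent in $G-e$. Thus $\bigcap_{i=2}^{k-1}E(H_i)=\{e\}$, which shows simultaneously that the $H_i$ are pairwise distinct (if $H_i=H_j$ then the same color-counting argument using $[k-1]\setminus\{i,j\}$ would force $\chi(H_i)\leq k-2$) and that the unordered subset $\{H_2,\ldots,H_{k-1}\}$ uniquely determines $e$. This establishes $\binom{f_{k-1}(G)}{k-2}\geq e(G)$; combining with the standard fact $\delta(G)\geq k-1$ (so $e(G)\geq (k-1)n/2$) and the bound $\binom{m}{k-2}\leq m^{k-2}/(k-2)!$ yields $f_{k-1}(G)\geq((k-1)!n/2)^{1/(k-2)}$. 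The main obstacle I anticipate is really just the clean setup of the two chromatic-number equalities (for $G-e$ and for $G-V_i$); once these are in place, the crucial fact that distinct $H_i$'s can only share edges forced to lie in $V_1$ is a one-line observation.
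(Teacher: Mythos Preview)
Your proposal is correct and follows essentially the same approach as the paper: for each edge $e$, use a proper $(k-1)$-coloring of $G-e$ with both endpoints of $e$ in one class, then extract a $(k-1)$-critical subgraph from $G$ minus each of the other $k-2$ color classes, and observe that $e$ is the unique edge common to all of them. You are slightly more explicit than the paper in verifying that the $H_i$ are pairwise distinct, but the argument is otherwise identical.
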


\begin{proof}
For each $e\in E(G)$, $G-e$ has a proper $(k-1)$-coloring, say with color classes $A_1,...,A_{k-1}$, where $V(e)\subseteq A_1$.
For each $2\leq i\leq k-1$, we see that $G-A_i$ has chromatic number $k-1$ and thus contains a $(k-1)$-critical subgraph $G_i^e$.
It is also clear that $e\in E(G_i^e)$.
Let $L(e)=\{G_2^e,...,G_{k-1}^e\}$.
Note that each graph in $L(e)$ is $(k-1)$-critical and contains $e$.
We claim that for any $f\in E(G-e)$ there is at least one subgraph in $L(e)$ not containing $f$.
To see this, we may assume $f=uv$ with $u\in A_i$ and $v\in A_j$ for some $1\leq i<j\leq k-1$, implying that $f\notin E(G_j^e)$.
This claim shows that $L(e)$ are distinct for all edges $e$ in $G$ and so $\binom{f_{k-1}(G)}{k-2}\geq e(G)$.
Since $e(G)\geq (k-1)n/2$, this further implies $f_{k-1}(G)\geq ((k-1)!n/2)^{\frac{1}{k-2}}$.
\end{proof}

Having this, our main focus is devoted to the case of 4-critical graphs.
Among others, we prove a tight bound on the number of odd cycles in 4-critical graphs.
This in fact is provided in a stronger form, which reveals a relation between the numbers of odd cycles and edges.
To state, we begin by introducing a parameter which will play an important role in the proofs:
for any graph $G$, let $$t(G)=|E(G)|-|V(G)|+1.$$
Note that if $G$ is 2-connected, then any ear-decomposition of $G$ has exactly $t(G)$ ears;
also, for 4-critical graph $G$, since every vertex has degree at least 3, we have $t(G)\geq |E(G)|/3\geq |V(G)|/2$.

\begin{thm}\label{Thm:4-cri-str}
If $G$ is a 4-critical graph on $n$ vertices and $m$ edges, then $f(G)\geq 0.02t^2(G)$.
Thus $$f(G)\geq \Omega(m^2)\geq \Omega(n^2).$$
\end{thm}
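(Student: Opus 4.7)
The plan is to reduce the $4$-critical setting to a stronger statement about $3$-connected non-bipartite graphs (which the abstract flags as being of independent interest), and then prove the quadratic bound for such graphs via an ear-decomposition argument that crucially exploits $3$-connectivity.

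First I would establish the intermediate lemma: every $3$-connected non-bipartite graph $H$ satisfies $f(H)\geq c\,t(H)^2$ for some absolute constant $c>0$. Given this, the theorem for a $4$-critical $G$ would follow by induction on $|V(G)|$. A $4$-critical graph is automatically $2$-connected and non-bipartite (since $\chi=4$), so if $G$ is itself $3$-connected the lemma applies directly with $c=0.02$. Otherwise $G$ has a $2$-vertex-cut $\{u,v\}$, and I would invoke the classical Gallai-style decomposition of $k$-critical graphs at $2$-cuts to split $G$ along $\{u,v\}$ into two smaller ``critical-like'' pieces $G_1,G_2$ with $t(G)=t(G_1)+t(G_2)+O(1)$. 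Each odd cycle contained in a single $G_i$, possibly concatenated with a $u$-$v$ path of the appropriate parity from the other side (whose existence is guaranteed by the decomposition), lifts to a distinct odd cycle of $G$. Splitting into the balanced and unbalanced cases for $t(G_1), t(G_2)$ then produces $f(G)\geq 0.02\bigl(t(G_1)^2+t(G_2)^2\bigr)$, which is at least $0.02\,t(G)^2$ after absorbing lower-order terms into the constant.

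For the $3$-connected lemma itself, I would fix an ear decomposition $C_0,P_1,\dots,P_{t-1}$ of $H$ beginning with an odd cycle $C_0$, which exists because $H$ is $2$-connected and non-bipartite. Write $H_{j-1}=C_0\cup P_1\cup\cdots\cup P_{j-1}$ for the graph built just before $P_j$ is added. Combining $P_j$ with any internally disjoint path of $H_{j-1}$ between its two endpoints yields a cycle whose parity is determined by the parities of the ear and of the chosen path. The pivotal use of $3$-connectivity, through a Menger-style argument, is that for every ear $P_j$ there are at least three internally disjoint $uv$-paths in $H$ between its endpoints; after a careful inductive choice of the decomposition order, one can force $H_{j-1}$ to contain paths of \emph{both} parities between the endpoints of $P_j$. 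Consequently, for every earlier ear $P_i$ with $i<j$ one can combine $P_j$ with a path running through $P_i$ of exactly the parity required to make the resulting cycle odd. Distinctness is enforced by recovering the pair $(i,j)$ from the two largest ear-indices appearing in the cycle, so that the $\binom{t}{2}$ pairs produce $\Omega(t^2)$ pairwise-distinct odd cycles.

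The main obstacle will be justifying the ``both parities available'' claim at each stage and controlling distinctness. With only $2$-connectedness one is guaranteed merely a single internally disjoint path between the endpoints of a new ear, yielding just one cycle per ear and hence a linear bound---this is essentially the regime of Hare's theorem. The jump to a quadratic bound genuinely requires the third internally disjoint path provided by $3$-connectivity, which supplies the parity flexibility needed for pairwise interaction between ears. Degenerate configurations such as ears of length one, pairs of ears that share endpoints, or ears whose endpoints sit in exceptional positions of $C_0$ will need case analysis to confirm that the parity paths can always be arranged and that no odd cycle is counted twice.
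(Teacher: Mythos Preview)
Your high-level plan matches the paper's: prove the bound first for $3$-connected non-bipartite graphs, then handle the general $4$-critical case by induction through the Dirac $2$-cut decomposition. But both halves of your proposal have genuine gaps.

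\textbf{The induction step.} You claim that $f(G)\ge 0.02\bigl(t(G_1)^2+t(G_2)^2\bigr)$ suffices, ``absorbing lower-order terms into the constant.'' This is false: when $t(G_1)\approx t(G_2)\approx t(G)/2$ you get only $0.01\,t(G)^2$, off by a multiplicative factor of $2$, not a lower-order term. The missing piece is the cross term $\Omega\bigl(t(G_1)\,t(G_2)\bigr)$, and you cannot get it merely from ``a $u$--$v$ path of the appropriate parity from the other side.'' The paper obtains it by first choosing the $2$-cut so that $G_1+xy$ is $3$-connected, and then invoking a separate lemma (Lemma~\ref{lem:EarDecom-Fix-Edge}) that produces $t(G_1)$ distinct $(x,y)$-paths of \emph{each} parity in $G_1$; pairing these with the $t(G_2)+1$ paths in $G_2$ yields the required $t(G_1)t(G_2)$ additional odd cycles.

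\textbf{The $3$-connected lemma.} Your ear-pairing argument does not work as stated. Three internally disjoint $u$--$v$ paths in the \emph{full} graph $H$ say nothing about paths of both parities inside the \emph{partial} graph $H_{j-1}$. And even granting both parities in $H_{j-1}$, there is no reason a path of the required parity should pass through a \emph{specified} earlier ear $P_i$: in general a $2$-connected graph contains no $u$--$v$ path through a prescribed edge of a prescribed parity, so your injection $(i,j)\mapsto$ odd cycle is undefined for many pairs. The paper's ear-decomposition lemmas (Section~\ref{sec:ear-decom}) do something much weaker---they anchor the decomposition on a non-separating induced odd cycle $D$ and extend each ear to a path with both ends on $D$, yielding only $t(G)$ cycles of each parity, a \emph{linear} count. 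The quadratic bound for $3$-connected graphs (Theorem~\ref{thm:G=3-con}) is obtained by an entirely different and considerably longer route: a recursive reduction on \emph{signed} graphs under a maximum-degree hypothesis $\Delta\le 0.2\,t$ (Theorem~\ref{thm:3-con-bound-maxdeg}), followed by a separate argument when a high-degree vertex exists. Your sketch does not approach this, and the ``careful inductive choice of the decomposition order'' you allude to would need to be spelled out---as written it is a hope, not an argument.
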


\noindent We remark that this is tight up to the constant factor.
To see this, by an $n$-vertex {\it $d$-wheel} $W(n, d)$ we denote the graph obtained from a cycle $C_{n-d}$ and a clique $K_d$ by joining each vertex of $C_{n-d}$ to each vertex of $K_d$.
It is {\it odd} if $n-d$ is odd and {\it even} otherwise.
For simplicity, we just call a $1$-wheel as a wheel.
Now we observe that the odd wheel $W=W(n, 1)$ is 4-critical and has $\binom{n-1}{2}+1$ odd cycles;
it also has $O(|E(W)|^2)$ and $O(t^2(W))$ odd cycles.

As an intermediate step and a result of independent interest, we prove a similar bound for 3-connected non-bipartite graphs as following.

\begin{thm}\label{thm:G=3-con}
If $G$ is a 3-connected non-bipartite graph, then $f(G)\geq 0.02t^2(G)$.
\end{thm}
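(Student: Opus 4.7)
The plan is to induct on $t(G)$, reducing $G$ by a single edge at each step. For the inductive step, I would seek an edge $e \in E(G)$ such that $G - e$ is still $3$-connected and non-bipartite; then $t(G-e) = t(G)-1$, and writing $N(e)$ for the number of odd cycles of $G$ through $e$, we have $f(G) = f(G-e) + N(e) \ge 0.02\,(t(G)-1)^2 + N(e)$ by the inductive hypothesis. Closing the induction thus reduces to establishing the pointwise bound $N(e) \ge 0.04\,t(G) - 0.02$, since the identity $0.02(t-1)^2 + (0.04t - 0.02) = 0.02 t^2$ gives exactly the target inequality.

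Two structural cases must be handled. If $G$ is not minimally $3$-connected and admits a removable edge whose removal keeps $G$ non-bipartite, we simply pick such an edge. The corner cases are: (i) $G$ is minimally $3$-connected, in which Mader's theorem guarantees a linear number of degree-$3$ vertices and allows a local reduction—a vertex suppression or a $Y$-$\Delta$ move at such a vertex—that decreases $t$ by one while preserving $3$-connectivity and non-bipartiteness; and (ii) every removable edge is bipartizing, which forces all odd cycles of $G$ to share a common edge and reduces the problem to a direct count in a bipartite graph with one extra edge.

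The chief obstacle is the pointwise lower bound $N(e) = \Omega(t(G))$: $3$-connectivity of $G-e$ alone produces only a constant number of internally disjoint paths between the endpoints of $e$ by Menger's theorem, which is far too few. I would overcome this by taking an ear decomposition $C_0, P_1, \ldots, P_{t(G)-1}$ of $G-e$ with $C_0$ an odd cycle (which exists since $G-e$ is $2$-connected and non-bipartite) and arguing that a constant fraction of the $t(G)-1$ ears contribute at least one new odd cycle through $e$. Making the parity bookkeeping uniform across ears—so that odd cycles are produced rather than lost to evenness—is the delicate point here and will likely require choosing the ear decomposition to start at a shortest odd cycle and controlling how each ear attaches to the current subgraph. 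The extremal example $W(n,1)$, whose odd-cycle count is $\binom{n-1}{2}/2+1 \sim t^2/4$, saturates the $0.02\,t^2$ bound up to the leading constant and serves as a calibration for the induction.
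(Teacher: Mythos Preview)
Your plan has a concrete error and a structural gap, and it diverges substantially from the paper's actual route.

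First, the concrete error: a $Y$--$\Delta$ move at a degree-$3$ vertex does \emph{not} decrease $t$. Removing one vertex and its three incident edges, then adding the three triangle edges (assuming none were already present), leaves $|E|$ unchanged and drops $|V|$ by one, so $t=|E|-|V|+1$ \emph{increases} by one. Vertex suppression applies only to degree-$2$ vertices, which a $3$-connected graph has none of. So your handling of the minimally $3$-connected case does not work as stated; Mader's degree-$3$ vertices do not, by themselves, give you a reduction that lowers $t$ while preserving $3$-connectivity and non-bipartiteness.

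Second, the structural gap: the heart of your argument is the bound $N(e)\ge 0.04\,t(G)-0.02$, which you correctly identify as delicate but do not prove. The paper does establish a result of this flavor (Lemma~\ref{lem:EarDecom-Fix-Edge}: at least $t(G)-1$ odd cycles through a fixed edge $xy$), but only under the extra hypothesis that \emph{both} $G-x$ and $G-y$ are non-bipartite, and its proof requires a carefully engineered ear-decomposition anchored at a non-separating induced odd cycle with controlled attachment of each new ear. Your sketch (``start at a shortest odd cycle and control how each ear attaches'') does not supply this, and without the two-sided non-bipartiteness hypothesis the bound can genuinely fail: see the signed example in the concluding remarks, where every odd cycle passes through a single edge.

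For comparison, the paper does not induct by peeling one edge. It splits on maximum degree: if some vertex has degree $\ge 0.2\,t(G)$ it counts directly (treating separately the case $G-x$ bipartite), and if $\Delta(G)\le 0.2\,t(G)$ it runs an iterative reduction \emph{through signed multigraphs} (Theorem~\ref{thm:3-con-bound-maxdeg}), contracting a non-separating odd cycle and a dominant block at each step; each step may drop $t$ by up to $0.4\,t(G)$, and the bookkeeping tracks exactly how many odd cycles are lost. The detour through signed graphs is precisely what lets the paper avoid the obstruction you hit: simple edge deletions and contractions do not simultaneously preserve $3$-connectivity, decrease $t$, and permit clean odd-cycle accounting.
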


We also give a new proof to the case $k=4$ of Problem \ref{prob:Gallai}.
It is significantly shorter than the one given in \cite{H19} -- by adding all rudimentary lemmas, it is about 2-page long.

\begin{thm}\label{Thm:4-cri-weak}
If $G$ is a 4-critical graph on $n$ vertices, then $f(G)\geq 2t(G)-2=2e(G)-2n$.
In particular $f(G)\geq n$, where the unique 4-critical graph achieving the equality is $K_4$ when $n=4$.
\end{thm}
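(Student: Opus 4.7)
The plan is to prove $f(G)\geq 2t(G)-2$ by induction on $t(G)$; once this inequality is established, the other two assertions of the theorem follow easily, since $2t(G)-2=2e(G)-2n\geq n$ by $\delta(G)\geq 3$, and the equality $f(G)=n$ forces $2e(G)=3n$, making $G$ cubic, whence $G=K_4$ by Brooks' theorem.

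For the base case $t(G)=3$, the condition $\delta(G)\geq 3$ gives $e(G)\geq 3n/2$, so $e(G)=n+2$ forces $n\leq 4$ and therefore $G=K_4$, which has exactly $f(K_4)=4=2t(K_4)-2$ odd cycles (its four triangles). This simultaneously settles the extremal characterization.

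For the inductive step with $t(G)\geq 4$, I would invoke the structural fact already established in the proof of Theorem \ref{Thm:general k} specialised to $k=4$: for every edge $e=uv\in E(G)$ and every proper $3$-coloring of $G-e$ with classes $A_1\supseteq\{u,v\}, A_2, A_3$, the subgraphs $G[A_1\cup A_3]$ and $G[A_1\cup A_2]$ contain odd cycles $C_e^{(2)}$ and $C_e^{(3)}$ through $e$, with $E(C_e^{(2)})\cap E(C_e^{(3)})=\{e\}$ and with the unordered pair $\{C_e^{(2)},C_e^{(3)}\}$ uniquely determining $e$. Starting from this, I would exhibit a reduction to a smaller $4$-critical graph. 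If $G$ has a $2$-separator $\{u,v\}$, Gallai's theorem on $2$-separators in critical graphs splits $G$ into two smaller $4$-critical graphs $G_1^{\ast}$, $G_2^{\ast}$ obtained from the two sides by adding the edge $uv$ if necessary; applying the induction hypothesis to each piece and aggregating odd cycles (those internal to a piece, i.e.\ not using $uv$, transfer directly to $G$; those of $G_i^{\ast}$ using $uv$ correspond to $u$-$v$ paths in $G_i$ of one parity and pair with suitable $u$-$v$ paths in $G_{3-i}$ to produce odd cycles of $G$) yields the required count via the standard $2$-separator identity $t(G_1^{\ast})+t(G_2^{\ast})=t(G)+1$. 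If $G$ is $3$-connected, Theorem \ref{thm:G=3-con} provides the stronger bound $f(G)\geq 0.02\,t(G)^2$, which dominates $2t(G)-2$ for all $t(G)$ above a small explicit constant; the finitely many remaining small values of $t(G)$ can be checked by hand, or derived from the simple bound $\binom{f(G)}{2}\geq e(G)$ coming directly from the edge-coloring argument above.

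The main obstacle is the clean bookkeeping of odd cycles across the $2$-separator: one must classify each odd cycle of $G$ as either internal to one piece or crossing the separator (via a pair of $u$-$v$ paths of opposite total parity), and then match these against the inductive counts on $G_1^{\ast}$ and $G_2^{\ast}$ without double counting. A delicate point is that an odd cycle of some $G_i^{\ast}$ which uses the virtual edge $uv$ does not itself lie in $G$, so its contribution to $f(G)$ must be recovered from combining $u$-$v$ paths across the separator; tracking this combination is what forces the case analysis to succeed at the threshold $2t(G)-2$ rather than at the weaker $2(t(G_1^{\ast})+t(G_2^{\ast}))-4$.
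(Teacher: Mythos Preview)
Your overall architecture (induction plus a split on whether $G$ is $3$-connected or has a $2$-cut) matches the paper's, and your derivation of $f(G)\geq n$ and of the $K_4$ equality case via Brooks' theorem is fine. However, there are two genuine gaps.

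First, your treatment of the $3$-connected case does not close. Theorem~\ref{thm:G=3-con} gives $f(G)\geq 0.02\,t(G)^2$, which dominates $2t(G)-2$ only once $t(G)$ is roughly $100$; below that threshold your fallback $\binom{f(G)}{2}\geq e(G)$ yields at best $f(G)\geq\sqrt{2e(G)}$, which is far weaker than $2t(G)-2$ (for instance at $t(G)=50$ you need $f(G)\geq 98$ but get only about $15$), and ``check by hand'' covers on the order of two hundred vertices. The paper avoids this entirely by first proving the dedicated Lemma~\ref{lem:3-con_simple}, which gives $f(G)\geq 2t(G)-2$ directly for every $3$-connected non-bipartite graph; this lemma, not Theorem~\ref{thm:G=3-con}, is the correct tool here.

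Second, your $2$-separator step is both misstated and incomplete. Dirac's structure lemma (Lemma~\ref{lem:2-cut}) does \emph{not} give two $4$-critical graphs by adding $uv$ to each side; rather $G_1+uv$ and $G_2/\{u,v\}$ are $4$-critical, and the contraction on one side matters for the parity accounting. More importantly, the aggregation you describe requires that each $G_i$ contain $(u,v)$-paths of \emph{both} parities, so that odd cycles of $H_i$ using the virtual edge can always be paired across the cut; you never establish this. The paper proves it explicitly and then uses the clean inequality $ad+bc\geq a+d$ (with $a,b,c,d\geq 1$ the path counts of each parity in each side) to conclude $f(G)\geq f(H_1)+f(H_2)\geq 2t(G)-2$. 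Without that ``both parities'' claim, your bookkeeping cannot reach the threshold.
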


We shall explain in the final section that all results on 4-critical graphs here also hold for $k$-critical graphs for all $k\geq 4$.

The rest of the paper is organized as following.
In Section 2, we give notations and collect basic lemmas for further use.
We then prove some lemmas for 3-connected non-bipartite signed graphs in Section 3
and using these, we give a short proof of Theorem \ref{Thm:4-cri-weak} in Section 4.
In Section 5, we prove two technical lemmas as tools for counting cycles of each parity.
In Section 6, we complete the proof of Theorem \ref{thm:G=3-con} by detouring to signed graphs.
In Section 7, we prove Theorem \ref{Thm:4-cri-str}.
The final section contains some concluding remarks and related problems.
We remark that we do not attempt to optimize the constant factors in our results,
preferring rather to provide a simpler presentation.

\section{Preliminaries}
The following structure lemma on $k$-critical graphs was first proved by Dirac \cite{G53,G64},
a detailed proof of which also can be found in \cite{ST} (see its Lemma 3.2).
\begin{lem}[\cite{G53,G64}]\label{lem:2-cut}
Let $k\geq 4$ be an integer, $G$ be a $k$-critical graph and $\{u,v\}$ be a 2-cut of $G$. Then
$uv\notin E(G)$ and there are unique proper induced subgraphs $G_1, G_2$ of $G$ such that
\begin{itemize}
\item[(a)] $G=G_1\cup G_2$ and $V(G_1)\cap V(G_2)=\{u,v\}$,
\item[(b)] $u$ and $v$ have no common neighbor in $G_2$, and
\item[(c)] both $G_1+uv$ and $G_2/\{u,v\}$ are $k$-critical.\footnote{The graph $G_2/\{u,v\}$ is obtained from $G_2$ by contracting $u$ and $v$ into a new vertex.}
\end{itemize}
\end{lem}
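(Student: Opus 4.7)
The plan is to analyze the components of $G-\{u,v\}$ and classify each according to how the cut constrains $(k-1)$-colorings. Let $H_1,\dots,H_t$ be the components of $G-\{u,v\}$, so $t\geq 2$, and set $G_i := G[V(H_i)\cup\{u,v\}]$. Each $G_i$ is a proper subgraph of $G$, hence $(k-1)$-colorable. I call $G_i$ of \emph{type A} if $\chi(G_i/\{u,v\})\geq k$, meaning every $(k-1)$-coloring of $G_i$ separates $u$ and $v$, and of \emph{type B} if $\chi(G_i+uv)\geq k$, meaning every $(k-1)$-coloring identifies them. The two types are mutually exclusive for proper subgraphs of $G$.

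First I would rule out $uv\in E(G)$: if $uv$ were an edge, then every $(k-1)$-coloring of each $G_i$ separates $u,v$, so by permuting colors one glues the per-component colorings along $u,v$ into a $(k-1)$-coloring of $G$, contradicting $\chi(G)=k$. The key observation is then that $\chi(G)\geq k$ forces the existence of both a type-A and a type-B component---if, say, no $G_i$ were type B, one could color every $G_i$ with $u,v$ on the same color and glue. So some $G_i$ is type B and some $G_j\neq G_i$ is type A. Now $k$-criticality forces $t=2$: any additional component $H_\ell$ could be deleted to yield a proper subgraph $G-V(H_\ell)$ that still contains a type-A and a type-B part, hence still has chromatic number $\geq k$, contradicting criticality. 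Labelling the unique type-B component $G_1$ and the unique type-A component $G_2$ establishes (a) and uniqueness.

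For (c), I push $(k-1)$-colorings of $G-e$ across the cut. Given an edge $e\in E(G_1)$, take a $(k-1)$-coloring $\phi$ of $G-e$; its restriction to the untouched $G_2$ forces $\phi(u)\neq\phi(v)$ by type A, so $\phi|_{G_1-e}$ is a valid $(k-1)$-coloring of $(G_1-e)+uv$. Symmetrically, for $e\in E(G_2)$, the restriction to the untouched $G_1$ forces $\phi(u)=\phi(v)$ by type B, so $\phi|_{G_2-e}$ descends to a $(k-1)$-coloring of $(G_2-e)/\{u,v\}$. Together with $\chi(G_1+uv)=k=\chi(G_2/\{u,v\})$ and the trivial $(k-1)$-colorability of $G_1$ (handling the edge $uv$ inside $G_1+uv$), this yields (c).

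The step I expect to be most delicate is (b), which I would prove by contradiction. Suppose $w\in V(H_2)$ is a common neighbor of $u$ and $v$. Then, viewed as simple graphs, $(G_2-uw)/\{u,v\}$ and $G_2/\{u,v\}$ coincide, since the parallel edges $uw,vw$ collapse to a single edge from $w$ to the contracted vertex. Hence $(G_2-uw)/\{u,v\}$ also has chromatic number $\geq k$, forcing every $(k-1)$-coloring of $G_2-uw$ to separate $u$ and $v$. But by $k$-criticality $G-uw$ admits a $(k-1)$-coloring $\phi$: its restriction to the untouched $G_1$ forces $\phi(u)=\phi(v)$ by type B, while its restriction to $G_2-uw$ forces $\phi(u)\neq\phi(v)$---a contradiction. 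Establishing (b) also disambiguates the contracted-vertex edges in (c), since each such edge then corresponds to a unique edge of $G_2$.
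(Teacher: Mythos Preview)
The paper does not give its own proof of this lemma: it is quoted from Dirac with a pointer to Shapira--Thomas for a detailed argument, so there is nothing in the paper to compare your proof against line by line. That said, your proposal is the standard Dirac argument and is correct in substance. The classification into ``type~A'' (every $(k-1)$-colouring separates $u,v$) and ``type~B'' (every $(k-1)$-colouring identifies $u,v$) pieces, the gluing argument forcing both types to occur, the reduction to $t=2$ via criticality, and the edge-deletion argument for (c) are exactly how this lemma is usually established. Your handling of (b)---observing that a common neighbour $w$ makes $(G_2-uw)/\{u,v\}$ equal to $G_2/\{u,v\}$ as simple graphs, and then deriving a parity contradiction on the colours of $u,v$ from the two sides of the cut---is clean and correct.

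One small slip to fix: in the sentence ``if, say, no $G_i$ were type~B, one could color every $G_i$ with $u,v$ on the same color and glue,'' the conclusion is inverted. ``Not type~B'' means $\chi(G_i+uv)\le k-1$, i.e.\ some $(k-1)$-colouring of $G_i$ gives $u,v$ \emph{different} colours; it is these you glue (after permuting so the colours of $u$ and $v$ agree across components). The symmetric statement---no $G_i$ of type~A allows colouring each $G_i$ with $u,v$ the \emph{same} colour and gluing---is what yields the ``same colour'' phrasing. Either direction suffices, and your conclusion that both types must occur is unaffected.
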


Answering a long-standing conjecture of Ore from 1967 on the number of edges in 4-critical graphs,
Kostochka and Yancey \cite{KY} proved the following tight result.

\begin{thm}[\cite{KY}] \label{Thm:KY}
If $G$ is a 4-critical graph, then $e(G)\geq \frac{5}{3}|V(G)|-\frac{2}{3}$.
\end{thm}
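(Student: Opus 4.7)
The plan is to use the potential method. Define, for any graph $H$, the potential $P(H) := 5|V(H)| - 3|E(H)|$; then the stated inequality $e(G)\geq (5n-2)/3$ is equivalent to $P(G)\leq 2$, so it suffices to show that no $4$-critical graph has potential at least $3$. I would argue by contradiction: let $G$ be a $4$-critical graph with $P(G)\geq 3$, chosen so that $|V(G)|$ is smallest possible. Note that $K_4$ is not a counterexample, since $P(K_4)=20-18=2$, so $|V(G)|\geq 5$, and the goal is to derive a contradiction from minimality.

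The technical core would be a reducibility lemma asserting that for every proper subset $R\subsetneq V(G)$ with $|R|\geq 2$, one has $P(G[R])\geq 8$, with finer bounds when $G[R]$ is not close to $K_4$-like structure. To prove this, pick any proper $3$-coloring $\phi$ of $G[R]$, which exists because $R$ is a proper subset of a $4$-critical graph. Form an auxiliary graph $G'$ by deleting $R$ from $G$ and introducing three new mutually adjacent vertices $c_1, c_2, c_3$, then attaching each outside-$R$ vertex $w$ to $c_i$ whenever $w$ had a neighbor in the $i$-th color class of $\phi$. If $G'$ is $3$-colorable, pasting its coloring with $\phi$ yields a $3$-coloring of $G$, contradicting $\chi(G)=4$; otherwise $G'$ contains a $4$-critical subgraph $G^\ast$ strictly smaller than $G$, so by minimality $P(G^\ast)\leq 2$, and unpacking the definition of $P$ translates this into the desired lower bound on $P(G[R])$.

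Armed with this lemma, I would finish via a global discharging argument. Assign each vertex $v$ the initial charge $d_G(v) - \tfrac{10}{3}$, so that the total charge is $2e(G)-\tfrac{10}{3}n = -\tfrac{2}{3}P(G) \leq -2$. Vertices of degree $3$ carry deficit $-\tfrac{1}{3}$, while higher-degree vertices contribute positive charge. Applying the reducibility lemma to small candidate sets such as $R=N(v)\cup\{v\}$ for a $3$-vertex $v$, together with variants for pairs of adjacent $3$-vertices, restricts how $3$-vertices may cluster in $G$; in particular each $3$-vertex must have a high-degree neighbor able to absorb its deficit through a suitable discharging rule. Verifying that the final charge at every vertex is nonnegative would then contradict the strictly negative total sum, completing the proof.

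The main obstacle will be the reducibility lemma: pinning down the sharp constant $P(G[R])\geq 8$ requires a delicate case analysis of how $\phi$ and the edges from $R$ to $V(G)\setminus R$ interact with the contracted vertices $c_1,c_2,c_3$, and of which pieces of $R$ survive inside $G^\ast$. The subsequent discharging phase, while technically lengthy, is comparatively routine once the potential bounds on induced subgraphs are in hand. Finally, sharpness is immediate from $G=K_4$, where $e(G)=6=(5\cdot 4-2)/3$.
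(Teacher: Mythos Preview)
The paper does not prove this theorem at all; it is quoted from Kostochka and Yancey \cite{KY} and used as a black box in the proof of Theorem~\ref{Thm:4-cri-weak}. There is therefore no ``paper's own proof'' to compare your attempt against.

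That said, your sketch does follow the potential method that Kostochka and Yancey actually use in \cite{KY}: define $\rho(R)=5|R|-3e(G[R])$, take a minimum counterexample, and prove a gap lemma for proper subsets by contracting the color classes of a $3$-coloring of $G[R]$ into a triangle and invoking minimality on a $4$-critical subgraph of the resulting graph. Two caveats. First, your claimed bound $P(G[R])\geq 8$ cannot be right as stated, since a single edge already gives $P=5\cdot 2-3\cdot 1=7$; the actual lemma in \cite{KY} gives a hierarchy of bounds (roughly $\rho(R)\geq 6$ in general, with sharper inequalities once one rules out small exceptional $R$), and getting these constants exactly right is where the work lies. Second, the endgame in \cite{KY} is not a discharging argument of the type you describe; rather, once the potential bounds are in hand they analyze directly the structure around degree-$3$ vertices and clusters of them, using further reductions (identifying vertices, deleting small configurations) that again appeal to minimality. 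Your discharging outline is plausible in spirit but would need the correct potential lemma and a concrete rule set before it could be assessed.
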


Given a subgraph $F$ in a graph $G$, by $G-F$ we denote the subgraph obtained from $G$ by deleting all vertices in $F$.
We say a cycle $C$ is {\it non-separating} in $G$ if $G-C$ is connected.
In 1980 Krusenstjerna-Hafstr{\o}m and Toft proved the following theorem (Theorems 4 and 5 in \cite{KHT}).

\begin{thm}[\cite{KHT}]\label{thm:KT}
Let $G$ be a graph which is either 4-critical or 3-connected and let $F$ be a connected subgraph of $G$
such that $G-F$ contains an odd cycle.
Then $G$ contains a non-separating induced odd cycle $C$ such that $V(C)\cap V(F)=\emptyset$.
\end{thm}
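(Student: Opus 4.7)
The plan is to use a lexicographic extremal argument on odd cycles of $G$ disjoint from $F$. Let $\mathcal{F}=\{C:C\text{ is an odd cycle of }G\text{ with }V(C)\cap V(F)=\emptyset\}$; by hypothesis $\mathcal{F}\neq\emptyset$. For each $C\in\mathcal{F}$, since $F$ is connected and disjoint from $V(C)$, the vertices of $F$ lie in a single component of $G-V(C)$, which I denote $B(C)$. Choose $C^*\in\mathcal{F}$ to first maximize $|V(B(C^*))|$ and then, subject to that, to minimize $|V(C^*)|$; I claim $C^*$ is as required. Inducedness of $C^*$ is immediate from the secondary minimization: any chord $xy$ of $C^*$ splits $C^*$ into two arcs of opposite parities (since $|C^*|$ is odd), so the chord together with the odd arc is a shorter odd cycle $C'\in\mathcal{F}$ with $V(C')\subsetneq V(C^*)$, hence $B(C')\supseteq B(C^*)$ and $|V(C')|<|V(C^*)|$, contradicting the choice of $C^*$.

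The main step is to prove $G-V(C^*)$ is connected. Assume a component $D\neq B(C^*)$ of $G-V(C^*)$ exists. In the 3-connected case, the set of attachments $N(V(D))\cap V(C^*)$ has size at least three, since otherwise it would be a cut of size at most $2$ separating the non-empty sets $V(D)$ and $V(B(C^*))$. Pick three attachments $v_1,v_2,v_3$ with neighbors $u_i\in V(D)$ of $v_i$; they split $C^*$ into three arcs of lengths $\ell_{12},\ell_{23},\ell_{13}$ summing to the odd integer $|C^*|$. Since $D$ is connected, take a Steiner tree in $D$ through $u_1,u_2,u_3$ with branch vertex $w$, yielding internally disjoint paths $Q_1,Q_2,Q_3$ in $D$ of lengths $p_1,p_2,p_3$ from $w$ to $u_1,u_2,u_3$. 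For each pair $\{i,j\}$, let $C_{ij}$ be the cycle formed by the $v_i$-to-$v_j$ arc of $C^*$ and the path $v_iu_i\cdots w\cdots u_jv_j$ through $D$. A direct computation gives
\begin{equation*}
|C_{12}|+|C_{23}|+|C_{13}|=|C^*|+2(p_1+p_2+p_3)+6\equiv 1\pmod 2,
\end{equation*}
so at least one $C_{ij}$ is odd; call it $C'$. Since $V(C')\subseteq V(C^*)\cup V(D)$ is disjoint from $V(F)$, we have $C'\in\mathcal{F}$. The remaining task is to verify that in $G-V(C')$ the unused attachment $v_k$, the interiors of the two unused arcs of $C^*$, and $V(D)\setminus V(C')$ (which contains $u_k$ and hence $Q_k$) all merge with $B(C^*)$, yielding $|V(B(C'))|>|V(B(C^*))|$ and contradicting the maximality in the choice of $C^*$.

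For the 4-critical case I would proceed by induction on $|V(G)|$. If $G$ is 3-connected, apply the previous paragraph. Otherwise Lemma~\ref{lem:2-cut} provides a 2-cut $\{u,v\}$ and a decomposition $G=G_1\cup G_2$ with $G_1+uv$ and $G_2/\{u,v\}$ both 4-critical on strictly fewer vertices. Identify the side of the cut carrying the guaranteed odd cycle disjoint from $F$ (using that $\{u,v\}$ separates, and modifying $F$ to remain connected on that side if needed), apply the inductive hypothesis to the smaller 4-critical graph there, and check that the resulting non-separating induced odd cycle lifts back to $G$ using the 2-cut structure and part (b) of Lemma~\ref{lem:2-cut}. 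The main obstacle is the connectivity bookkeeping in the 3-connected case, namely certifying $B(C')\supsetneq B(C^*)$ after the swap; a cleaner alternative is to replace the primary extremal criterion by ``minimize the number of components of $G-V(C)$'', so that the parity-based construction of $C'$ directly drops the component count and the rest of the argument proceeds uniformly.
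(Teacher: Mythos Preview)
The paper does not prove this statement; it is quoted as a known result of Krusenstjerna-Hafstr{\o}m and Toft (1980) and used as a black box, so there is no in-paper argument to compare against. Your extremal framework (maximize the $F$-component $B(C)$, then minimize $|V(C)|$) is the standard route to such results, and the inducedness step is fine.

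The real gap is exactly where you flag it. After swapping to $C'=C_{ij}$ you assert that $v_k$, the leftover arcs, and $D\setminus V(C')$ ``merge with $B(C^*)$'', but nothing in your setup forces this. Concretely, suppose $G-V(C^*)$ has just the two components $B(C^*)$ and $D$, and every attachment of $B(C^*)$ on $C^*$ lies on the single arc $A_{ij}$ (this is consistent with $|N(B(C^*))|\ge 3$ provided that arc is long enough). Then every vertex of the leftover path $V(C^*)\setminus V(C')$ has neighbours only in $D\cup V(C^*)$, while $B(C^*)$ has neighbours only in $V(A_{ij})\subseteq V(C')$; hence $B(C')=B(C^*)$ on the nose, and since $|V(C')|$ can exceed $|V(C^*)|$, neither extremal criterion is violated. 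Closing the argument requires feeding the attachments of $B(C^*)$, not just those of $D$, into the choice of the $v_i$'s or of the arc---your three-point parity trick never looks at $B(C^*)$ at all. Your proposed alternative (minimize the number of components of $G-V(C)$) does not rescue things cleanly: it jeopardizes the inducedness step, since shortening along a chord releases a path that may form a brand-new component, and even in the non-separating step $D\setminus V(C')$ need not be connected, so the component count need not drop. The 4-critical induction via Lemma~\ref{lem:2-cut} is only a plan as written; the ``modifying $F$'' and ``lifting back'' steps each hide nontrivial case analysis.
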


A path with end-vertices $x$ and $y$ is called an {\it $(x,y)$-path}.
Let $G$ be a given graph (not necessarily connected).
A vertex $v\in V(G)$ is called a {\it cut-vertex} of $G$ if $G-v$ has more components than $G$.
A {\it block} $B$ of $G$ is a maximal connected subgraph of $G$ such that there exists no cut-vertex of $B$.
So a block is either an isolated vertex, an edge or a 2-connected graph.

\begin{lem} \label{lem path}
For any two distinct vertices $x,y$ in a block $B$, there are at least $t(B)+1$ distinct $(x,y)$-paths in $B$.
\end{lem}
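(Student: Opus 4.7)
The plan is to prove the lemma by induction on $t(B)=|E(B)|-|V(B)|+1$, using an ear decomposition of the block $B$. Since $x\neq y$, the block $B$ is not an isolated vertex, so it is either a single edge or $2$-connected. In the base cases, if $B$ is a single edge then $t(B)=0$ and the unique $(x,y)$-path matches the bound $t(B)+1=1$; if $B$ is a cycle then $t(B)=1$ and the two arcs give exactly $t(B)+1=2$ paths.

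For the inductive step, assume $B$ is $2$-connected with $t(B)\geq 2$. Since $B$ is $2$-connected, $x$ and $y$ lie on a common cycle $C_0$. By Whitney's classical theorem, $B$ admits an open ear decomposition starting from any prescribed cycle, so I fix one starting from $C_0$. Let $P$ be the final ear, with endpoints $u,v$, and let $B'$ denote the subgraph obtained from $B$ by deleting the edges and interior vertices of $P$. Then $B'$ is $2$-connected, contains both $x$ and $y$ (since $C_0\subseteq B'$), and satisfies $t(B')=t(B)-1$. By the inductive hypothesis, there are at least $t(B')+1=t(B)$ distinct $(x,y)$-paths in $B'$; none of these uses any edge of $P$.

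The heart of the argument is to exhibit one additional $(x,y)$-path in $B$ that traverses the ear $P$, which will then be distinct from all $(x,y)$-paths in $B'$. I would split into cases based on $|\{x,y\}\cap\{u,v\}|$. If $\{x,y\}=\{u,v\}$, then $P$ itself is an $(x,y)$-path. If the two sets share exactly one vertex, say $x=u$, then because $B'$ is $2$-connected the subgraph $B'-x$ remains connected, so a $(v,y)$-path exists in $B'-x$ and concatenating it with $P$ gives the desired path. If $\{x,y\}\cap\{u,v\}=\emptyset$, then I invoke Menger's theorem inside $B'$: since $B'$ is $2$-connected, no single vertex separates $\{x,y\}$ from $\{u,v\}$, hence there exist two vertex-disjoint paths $Q_1,Q_2$ in $B'$ joining $\{x,y\}$ to $\{u,v\}$; one of the two pairings yields a $(x,u)$-path and a $(v,y)$-path (or a $(x,v)$-path and a $(u,y)$-path), which combine with $P$ into a simple $(x,y)$-path through $P$.

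The main obstacle is the disjoint case, which relies on the set-version of Menger's theorem and on the flexibility of the pairing ($x\to u,\,y\to v$ or $x\to v,\,y\to u$) to guarantee that the three pieces can be glued into a \emph{simple} path; the other intersection cases are routine connectivity arguments. Adding this new path to the inductive $(x,y)$-paths in $B'$ yields at least $(t(B')+1)+1=t(B)+1$ distinct $(x,y)$-paths in $B$, completing the induction.
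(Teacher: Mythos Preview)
Your proof is correct and takes essentially the same approach as the paper: start an ear decomposition from a cycle through $x$ and $y$, and for each ear use the $2$-connectivity of the preceding subgraph (via Menger) to route an $(x,y)$-path through that ear, yielding one new path per ear plus the two paths on the initial cycle. The only cosmetic difference is that you phrase it as an induction peeling off the last ear, whereas the paper treats all ears at once in a single direct argument; your explicit case split on $|\{x,y\}\cap\{u,v\}|$ is subsumed in the paper by a single application of the two-disjoint-paths property between $\{a_i,b_i\}$ and $\{x,y\}$.
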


\begin{proof}
If $B$ is an edge $xy$, then this holds trivially. So we may assume that $B$ is 2-connected.
Let $t:=t(B)$ and $C$ be any cycle containing $x$ and $y$.
By the standard ear decomposition of a 2-connected graph, there exist $t-1$ paths $P_1,P_2,...,P_{t-1}$ in $B$
such that $B_i:=C\cup (\cup_{j=1}^i P_j)$ is 2-connected for each $0\leq i\leq t-1$, where $B_0=C$ and $B_{t-1}=B$.
For each $1\leq i\leq t-1$, let $a_i$ and $b_i$ be the end-vertices of $P_i$.
As $B_{i-1}$ is 2-connected, there exist two disjoint paths from $\{a_i,b_i\}$ to $\{x,y\}$ in $B_{i-1}$.
This gives an $(x,y)$-path in $B_i$ containing the path $P_i$.
Together with the two $(x,y)$-paths in $C$, we get at least $t+1$ distinct $(x,y)$-paths in $B$.
\end{proof}

Let $\dB$ be the set of blocks in a graph $G$ and $\dC$ be the set of cut-vertices of $G$.
The \emph{block structure} of $G$ is the bipartite graph with bipartition $(\dB,\dC)$, where $c\in \dC$ is adjacent to $B_i\in \dB$ if and only if
$c\in V (B_i)$. Note that the block structure of any connected graph is a tree.
An \emph{end-block} in $G$ is a block containing at most one cut-vertex of $G$.

\begin{prop}\label{prop:t}
Let $G$ be a connected graph.
Then $t(G)=\sum_{B\in \dB} t(B)$.
\end{prop}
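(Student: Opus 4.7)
The plan is to prove the identity by induction on the number of blocks $|\dB|$. The base case $|\dB|=1$ is immediate: $G$ is its own unique block, so both sides equal $t(G)$.

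For the inductive step, the key structural fact is that since the block structure of the connected graph $G$ is a tree, it has at least two leaves whenever $|\dB|\geq 2$. A leaf of the block structure is exactly an end-block, and when $|\dB|\geq 2$ such an end-block $B$ must contain precisely one cut-vertex $c$ of $G$ (if it contained none, $B$ would itself be a component of $G$). Let $G' = G - (V(B)\setminus\{c\})$. Then $G'$ is connected and its set of blocks is exactly $\dB\setminus\{B\}$; in particular $|\dB(G')|=|\dB|-1$, so the induction hypothesis applies and gives $t(G') = \sum_{B'\in\dB\setminus\{B\}} t(B')$.

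It then remains to check that $t(G)=t(G')+t(B)$. This follows from two elementary bookkeeping observations: every edge of $G$ lies in a unique block, so $|E(G)| = |E(G')|+|E(B)|$; and the only vertex of $B$ that survives in $G'$ is $c$, so $|V(G)| = |V(G')|+|V(B)|-1$. Substituting these into the definition of $t$ makes the additive constants line up:
\[
t(G) = |E(G)|-|V(G)|+1 = \bigl(|E(G')|-|V(G')|+1\bigr) + \bigl(|E(B)|-|V(B)|+1\bigr) = t(G')+t(B),
\]
which closes the induction.

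There is no serious obstacle here beyond invoking the correct structural fact about the block-cut tree (existence of an end-block with exactly one cut-vertex). As an alternative route one could give a one-shot double counting: writing $b(v)$ for the number of blocks containing $v$, the identities $\sum_B |E(B)|=|E(G)|$ and $\sum_B |V(B)|=\sum_v b(v)$ reduce the claim to $\sum_v (b(v)-1)=|\dB|-1$, which is precisely the edge count of the block-cut tree on $\dB\cup\dC$. However, the induction above is more self-contained and avoids introducing the auxiliary tree formally.
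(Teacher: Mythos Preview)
Your proof is correct and follows exactly the approach the paper indicates: induction on the number of blocks via the block structure, peeling off an end-block at each step. The paper's own proof is a single sentence to this effect, so your write-up simply fills in the routine details the authors left implicit.
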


\begin{proof}
This can be proved easily by induction on the number of blocks using the block structure.
\end{proof}

A {\it signed graph} is a graph $G$ associated with a function $p: E(G) \to \{0,1\}$.
For $e\in E(G)$, we refer $p(e)$ as the {\it parity} of $e$.
The parity of a path or a cycle $C$ in $G$ is the parity of the sum of the parities of all edges in $E(C)$,
and we say $C$ is {\it even} if its parity is 0 and {\it odd} otherwise.
A signed graph is {\it bipartite} if every cycle is even and {\it non-bipartite} otherwise.
In this paper we view every graph as a signed graph by assigning 1 to every edge.
The following property can be derived promptly.

\begin{prop}\label{prop:bip}
A signed graph $(G,p)$ is bipartite if and only if there exists a bipartition $V(G)=A\cup B$ such that each $e\in E(A,B)$ is odd and each $e\in E(G)\backslash E(A,B)$ is even.
\end{prop}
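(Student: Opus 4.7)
The plan is to prove the two directions separately; the nontrivial content lies in the ``only if'' direction.

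For the ``if'' direction, I would assume such a bipartition $V(G)=A\cup B$ exists and take any cycle $C$ in $G$. A standard parity argument shows that $|E(C)\cap E(A,B)|$ is even (any closed walk crosses the cut between $A$ and $B$ an even number of times). By hypothesis these crossing edges are exactly the odd edges of $C$, so the parity of $C$ is $0$, and hence $(G,p)$ is bipartite.

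For the ``only if'' direction, I would first reduce to the connected case (otherwise apply the argument component-wise and take the disjoint union of the bipartitions) and then build the desired bipartition from a spanning tree. Fix a spanning tree $T$ of $G$ and a root $r\in V(G)$. For each $v\in V(G)$, let $\ell(v)\in\{0,1\}$ denote the parity of the unique $r$--$v$ path in $T$, and set
\[A=\{v\in V(G):\ell(v)=0\},\qquad B=\{v\in V(G):\ell(v)=1\}.\]
It will then remain to verify that every edge $e=uv$ is odd iff $u,v$ lie in different parts. For a tree edge this is immediate from the definition of $\ell$, since $\ell(u)$ and $\ell(v)$ differ by $p(e)$ modulo $2$. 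For a non-tree edge, the key step is to look at the fundamental cycle $C_e$ obtained by adding $e$ to $T$; its parity equals $p(e)+\ell(u)+\ell(v) \pmod 2$, once one observes that the $u$--$v$ path in $T$ has parity $\ell(u)+\ell(v) \pmod 2$ (the common initial segment with $r$ is traversed twice and therefore cancels). Since $(G,p)$ is bipartite, $C_e$ is even, which forces $p(e)\equiv \ell(u)+\ell(v) \pmod 2$, exactly as required.

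The argument is entirely elementary; the only step requiring a moment of care will be identifying the parity of the $u$--$v$ tree path with $\ell(u)+\ell(v)\pmod 2$, after which the fundamental-cycle computation finishes both halves at once. I do not anticipate any genuine obstacle beyond this bookkeeping.
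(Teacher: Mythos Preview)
Your argument is correct. The paper does not actually supply a proof of this proposition; it simply remarks beforehand that ``the following property can be derived promptly'' and moves on. Your spanning-tree/fundamental-cycle approach is a standard and complete way to fill this in, and the bookkeeping step you flag (that the parity of the $u$--$v$ tree path equals $\ell(u)+\ell(v)\pmod 2$) is handled correctly.
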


We also need a lemma proved by Kawarabayashi, Reed and Lee (see Lemma 2.1 in \cite{KRL}).

\begin{lem}[\cite{KRL}]\label{lem_nonseparate}
If $s$ is a vertex in a 3-connected signed graph $G$ such that $G-s$ is not bipartite,
then there is a non-separating induced odd cycle $C$ in $G$ with $s\notin V(C)$.
\end{lem}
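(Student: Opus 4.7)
The plan is a minimal-counterexample argument patterned on Thomassen's classical proof that a 3-connected graph has a non-separating induced cycle avoiding any prescribed vertex, with parity handled by an edge-sum identity in the cycle space.

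First I would verify that the family
\[
\mathcal{F} = \{C : C \text{ is an induced odd cycle of } G,\ s \notin V(C)\}
\]
is nonempty. Since $G$ is 3-connected, $G-s$ is 2-connected, and it is non-bipartite by hypothesis. Any shortest odd cycle in $G-s$ must be induced, because a chord would split it into two cycles exactly one of which is odd and shorter; such a cycle lies in $\mathcal{F}$.

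Next, pick $C\in\mathcal{F}$ minimizing the number $r(C)$ of components of $G-V(C)$, with ties broken by minimizing $|V(G)\setminus V(C)|$. Suppose $r(C)\ge 2$ and aim for a contradiction. Let $D$ be a component of $G-V(C)$ not containing $s$. By 3-connectivity, $D$ has at least three attachments $x_1,x_2,x_3\in V(C)$, which split $C$ into three arcs $B_1,B_2,B_3$ (with $B_i$ joining the two $x_j$'s for $j\neq i$). Since $D$ is connected, one finds a tripod in $G[V(D)\cup\{x_1,x_2,x_3\}]$: a vertex $y\in V(D)$ and internally disjoint paths $P_1,P_2,P_3$ from $y$ to $x_1,x_2,x_3$. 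Setting $Q_i:=P_j\cup P_k$ (for $\{i,j,k\}=\{1,2,3\}$) and $\widetilde{C}_i:=B_i\cup Q_i$, each $P_\ell$ appears in exactly two of the $Q_i$'s, so the mod-$2$ edge-sum gives
\[
p(\widetilde{C}_1)+p(\widetilde{C}_2)+p(\widetilde{C}_3)\equiv p(B_1)+p(B_2)+p(B_3)=p(C)\equiv 1\pmod 2.
\]
Thus some $\widetilde{C}_i$ is odd, and it clearly avoids $s$ since the tripod lies in $V(D)\cup V(C)$ and $s$ belongs to neither.

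The remaining and technically most delicate step is to refine this odd cycle into an \emph{induced} $C'\in\mathcal{F}$ with $(r(C'),|V(G)\setminus V(C')|)$ strictly smaller than $(r(C),|V(G)\setminus V(C)|)$ in lexicographic order. Iterated chord removal --- at each chord, keep the odd sub-cycle --- produces an induced odd cycle from $\widetilde{C}_i$. The real hurdle is ensuring the component count does not go up: another component $D'$ of $G-V(C)$ might in principle have all its attachments absorbed into $V(\widetilde{C}_i)$, leaving $D'$ isolated, and vertices of $V(C)\setminus V(\widetilde{C}_i)$ might themselves spawn new pieces. I would address this by a careful choice of $D$, $y$, and the attachments $x_1,x_2,x_3$ --- for instance taking $D$ of minimum size and $x_1,x_2,x_3$ to be consecutive attachments of $D$ along $C$ --- so that the dropped arcs $B_j,B_k$ remain long enough to carry the attachments of every other $D'$, and the surviving portion of $D$ stays connected through the tripod. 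With such a choice, $r(C')<r(C)$, yielding the required contradiction.
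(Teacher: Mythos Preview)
The paper does not prove this lemma at all: it is quoted verbatim from Kawarabayashi--Reed--Lee~\cite{KRL} (their Lemma~2.1) and used as a black box. So there is no ``paper's own proof'' to compare against; your proposal is an attempt to reproduce the external result.

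Your overall strategy --- pick an induced odd cycle avoiding $s$ that is extremal for some potential, then reroute through a second component via a tripod and use the parity identity $\sum p(\widetilde C_i)\equiv p(C)$ --- is indeed the standard approach, and your first two steps are fine. The genuine gap is exactly where you flag it. Your chosen potential (minimize $r(C)$, then $|V(G)\setminus V(C)|$) does not obviously decrease: after you replace arc $B_i$ by the tripod path $Q_i$ and then shortcut chords, the vertices of the discarded arcs $B_j,B_k$ are thrown back into $G-V(C')$, and nothing you have said prevents them from forming new components or from failing to merge with the $s$-component. Your proposed fix (``take $D$ of minimum size and consecutive attachments'') is a hope, not an argument, and in fact consecutive attachments of the \emph{smallest} component say nothing about where the \emph{other} components attach along $C$.

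The usual remedy, and the one KRL use, is to change the potential: choose $C\in\mathcal F$ so that the component of $G-V(C)$ containing $s$ is as large as possible (with $|V(C)|$ minimal as a tiebreaker). Then any rerouted induced odd cycle $C'\subseteq V(C)\cup V(D)$ still avoids $s$, and the $s$-component of $G-V(C')$ contains the old $s$-component together with at least one extra vertex coming from an interior vertex of a discarded arc (here $3$-connectivity of $G$ is used again to guarantee that such a vertex has a neighbour outside $C$, and an additional short argument pins it to the $s$-side). This monotonicity is what makes the induction go through; with your potential the monotonicity is simply not established.
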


Throughout the rest of this paper, a set of edges is called {\it independent} if their vertices are all disjoint.
For any integer $k\geq 1$, we write $[k]$ as $\{1,2,...,k\}$.

\section{Lemmas on 3-connected non-bipartite signed graphs}\label{sec:3-conn}
Throughout this section, let $G$ be a 3-connected non-bipartite signed graph.
By Lemma \ref{lem_nonseparate}, there exists an induced odd cycle $C$ in $G$ such that $G-C$ is connected.
Fix such a cycle $C$ and let $H=G-C$, $t=t(H)$ and $m=|E(C,H)|$.
Then it is straightforward to see that $t(G)=t+m$.

A pair of edges $xa, yb\in E(C,H)$ with $x,y\in V(C)$ and $a,b\in V(H)$ is called {\it good} if $x\neq y$.
Given such a pair $\{xa,yb\}$, we call any $(a,b)$-path contained in $H$ a {\it good path}.
It is easy to see that any good $(a,b)$-path in $H$ can be uniquely extended to an odd cycle in $G$ by adding $xa,yb$ and one of the two $(x,y)$-paths in $C$.
Such an odd cycle will be called \emph{basic} in $G$ for the good pair $\{xa,yb\}$.

\begin{lem}\label{lem: 2-connected H}
If $H$ is 2-connected, then there are at least $(t+1)m$ distinct basic cycles in $G$.
\end{lem}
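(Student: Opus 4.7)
The strategy is to count basic cycles by summing contributions from good pairs of edges in $E(C,H)$. Since $H$ is 2-connected, it is a single block, so Lemma~\ref{lem path} applied to $H$ yields, for any two distinct vertices $a,b\in V(H)$, at least $t+1$ distinct $(a,b)$-paths in $H$; each such path extends uniquely to a basic cycle for the good pair $\{xa,yb\}$. A good pair $\{xa,ya\}$ with a common $H$-endpoint contributes exactly one basic cycle via the trivial path at $a$.

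Let $G_1$ (resp.\ $G_2$) denote the set of good pairs with $a\neq b$ (resp.\ $a=b$). Distinct good pairs yield disjoint basic cycles since the unordered pair of $E(C,H)$-edges used by a basic cycle is uniquely determined, and distinct paths for the same pair give distinct cycles. Hence the number of basic cycles is at least $(t+1)|G_1|+|G_2|$, and the task reduces to the combinatorial inequality
\[
(t+1)|G_1|+|G_2|\ \geq\ (t+1)\,m.
\]

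For each edge $e=xa\in E(C,H)$ I would work locally: setting $S_e=\{yb\in E(C,H):y\neq x,\ b\neq a\}$, the number of basic cycles through $e$ is at least $(t+1)|S_e|+(\delta_a-1)$, where the first term comes from good pairs $\{e,e'\}$ with $e'\in S_e$ (each contributing $t+1$ paths by Lemma~\ref{lem path}), and the second from the $\delta_a-1$ good pairs $\{e,ya\}$ with $y\in(N(a)\cap V(C))\setminus\{x\}$. Summing over $e$ and dividing by $2$ (each basic cycle uses exactly two edges of $E(C,H)$) recovers the displayed bound via the identities $\sum_e|S_e|=2|G_1|$ and $\sum_e(\delta_a-1)=2|G_2|$. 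The 3-connectivity of $G$ enters through the key observation that $|S_e|\geq 1$ for every $e$: otherwise $\{x,a\}$ would be a 2-cut of $G$, as the path $C-x$ and the connected graph $H-a$ would be separated in $G-\{x,a\}$.

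The main obstacle is that the bound $|S_e|\geq 1$ alone yields only $|G_1|\geq m/2$, short of what is required. I expect to close the gap by a case analysis of the degenerate edges with $|S_e|=1$: there the identity $m=d_H(x)+\delta_a$ forces almost every edge of $E(C,H)$ to be incident to $x$ or $a$, and a further appeal to 3-connectivity (ruling out $\{x,y^\ast\}$ as a 2-cut, where $y^\ast$ is the $V(C)$-endpoint of the unique edge in $S_e$) forces $\delta_a\geq 2$. The resulting lower bound on $\delta_a$ supplies enough additional $G_2$-pairs through $e$ to compensate. Combining these local estimates via a global averaging over the edges of $E(C,H)$ should then close the inequality and establish the lemma.
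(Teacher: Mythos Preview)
Your reduction to the inequality $(t+1)|G_1|+|G_2|\ge(t+1)m$ is sound, and the observation that $|S_e|\ge1$ for every $e\in E(C,H)$ (else $\{x,a\}$ would be a 2-cut) is correct. The gap is in the proposed repair for degenerate edges. When $|S_e|=1$, the local contribution of $e$ to $\sum_e\bigl[(t+1)|S_e|+(\delta_a-1)\bigr]$ is $(t+1)+(\delta_a-1)$, whereas the per-edge target is $2(t+1)$; the shortfall is $t+2-\delta_a$. Your appeal to 3-connectivity yields only $\delta_a\ge2$, and since each $G_2$-pair contributes a single basic cycle, this adds a quantity bounded independently of $t$. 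For large $t$ it cannot close a deficit of order $t$. The vague ``global averaging'' you invoke would have to show that the surpluses from edges with $|S_e|\ge2$ dominate these deficits, and nothing in your outline establishes that.

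The paper's argument bypasses this entirely by extracting a stronger consequence of 3-connectivity: since $|V(C)|\ge3$ and $|V(H)|\ge3$, Menger's theorem gives three \emph{independent} (pairwise vertex-disjoint) edges $x_1a_1,x_2a_2,x_3a_3\in E(C,H)$. The three pairs among them lie in $G_1$. For each of the remaining $m-3$ edges $yb$, at least one $x_ia_i$ is vertex-disjoint from $yb$ (an edge can meet at most two of the three), yielding another $G_1$-pair containing $yb$. These $m$ pairs are distinct, so $|G_1|\ge m$, and your target inequality follows at once with no $G_2$-compensation or case analysis. In effect, the missing idea in your plan is to use three disjoint $C$--$H$ edges as a fixed ``anchor'' rather than arguing edge by edge.
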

\begin{proof}
Clearly we have $|C|\geq 3$ and $|V(H)|\geq 3$.
Since $G$ is 3-connected, there are 3 independent edges $x_ia_i\in E(C,H)$ with $x_i\in V(C)$ and $a_i\in V(H)$ for all $i\in [3]$.
By Lemma \ref{lem path}, for different $i,j\in [3]$, we get at least $t+1$ distinct $(a_i,a_j)$-paths in $H$.
This gives at least $3(t+1)$ distinct basic cycles in $G$ using exactly two of $\{x_1a_1, x_2a_2, x_3a_3\}$.
For any $yb\in E(C,H)$ other than $\{x_ia_i\}$,
there is at least one edge (say $x_1a_1$) in $\{x_ia_i\}$ independent of $yb$.
Using Lemma \ref{lem path}, similarly one can find at least $t+1$ distinct basic cycles using $yb$ and $x_1a_1$.
Together we see at least $3(t+1)+(m-3)(t+1)=(t+1)m$ distinct basic cycles in $G$.
\end{proof}

Let $\dB$ be the set of blocks in $H$ and $\dC$ be the set of cut-vertices in $H$.
For $a,b\in V(H)$, by $\mathcal{P}_{a,b}$ we denote the shortest path $B_{j_1}c_1B_{j_2}c_2...c_{\ell-1}B_{j_\ell}$ in the block structure $(\dB,\dC)$ of $H$
satisfying that $a\in V(B_{j_1})$ and $b\in V(B_{j_\ell})$, where $B_i\in \dB$ and $c_j\in \dC$.

\begin{lem}\label{lem:block tree}
Let $a,b\in V(H)$ be two distinct vertices.
Then there are at least $\prod_{B\in \mathcal{P}_{a,b}\cap \dB}(t(B)+1)\geq \left(\sum_{B\in \mathcal{P}_{a,b}\cap \dB} t(B)\right)+1$ distinct $(a,b)$-paths in $H$.
\end{lem}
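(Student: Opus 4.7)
The plan is to decompose any $(a,b)$-path in $H$ according to the block structure and then count independently in each block using Lemma \ref{lem path}.

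First I would observe that since the block structure of $H$ is a tree, any $(a,b)$-path $P$ in $H$ must pass through exactly the blocks $B_{j_1},\dots,B_{j_\ell}$ of $\mathcal{P}_{a,b}$, in this order, and must use the cut vertices $c_1,\dots,c_{\ell-1}$ in this order. Indeed, because $P$ is a simple path and two distinct blocks share at most one vertex (a cut vertex), the sequence of blocks visited by $P$ traces out a walk in the tree $(\dB,\dC)$ from $B_{j_1}$ to $B_{j_\ell}$; since $P$ visits each cut vertex at most once and $\mathcal{P}_{a,b}$ is the unique shortest such walk, $P$ must follow exactly $\mathcal{P}_{a,b}$. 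Hence $P$ decomposes uniquely as the concatenation of an $(a,c_1)$-path inside $B_{j_1}$, a $(c_{i-1},c_i)$-path inside $B_{j_i}$ for each $2\le i\le \ell-1$, and a $(c_{\ell-1},b)$-path inside $B_{j_\ell}$ (when $\ell=1$, $P$ is simply an $(a,b)$-path in the unique block $B_{j_1}$).

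Conversely, any choice of such paths inside each block can be concatenated into an $(a,b)$-path in $H$, and distinct choices give distinct paths. Applying Lemma \ref{lem path} to each block $B\in \mathcal{P}_{a,b}\cap \dB$ (with the two prescribed endpoints, which are distinct by the minimality of $\mathcal{P}_{a,b}$) yields at least $t(B)+1$ paths in that block, and independence of the choices gives the lower bound
$$\prod_{B\in \mathcal{P}_{a,b}\cap \dB}(t(B)+1)$$
on the number of $(a,b)$-paths in $H$.

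Finally, to obtain the second inequality I would just note that for any nonnegative integers $t_1,\dots,t_\ell$ one has $\prod_{i=1}^\ell (t_i+1)\ge 1+\sum_{i=1}^\ell t_i$, which follows by a trivial induction (expanding the product and keeping only the constant term and the linear terms). No step here looks like a real obstacle; the only thing to be slightly careful about is the justification that $P$ must traverse the blocks in the exact order prescribed by $\mathcal{P}_{a,b}$, which is why the tree structure of $(\dB,\dC)$ and the simplicity of $P$ are both needed.
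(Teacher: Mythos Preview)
Your proof is correct and follows essentially the same approach as the paper: set $c_0=a$, $c_\ell=b$, apply Lemma~\ref{lem path} inside each block $B_{j_i}$ to get at least $t(B_{j_i})+1$ distinct $(c_{i-1},c_i)$-paths, and multiply. The paper's proof is a two-line version of yours; your additional justification that the concatenations are simple and distinct, and the explicit check of the product-versus-sum inequality, are fine but are treated as routine in the paper.
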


\begin{proof}
Let $B_1c_1B_2c_2...c_{\ell-1}B_\ell$ be the path $\mathcal{P}_{a,b}$, where $a\in V(B_1)$ and $b\in V(B_\ell)$.
Let $c_0=a$ and $c_\ell=b$.
By Lemma \ref{lem path}, there are at least $t(B_i)+1$ distinct $(c_{i-1},c_i)$-paths in $B_i$ for each $1\leq i\leq \ell$, implying this lemma.
\end{proof}

In the rest of this section, we assume that $H$ is connected but not 2-connected.
For each end-block $B_i$ in $H$, we define the unique cut-vertex of $H$ in $B_i$ to be $c_i$.
We now define a good pair of edges $\{e_i,f_i\}$ in $E(C,B_i-c_i)$, called {\it staple edges} of the end-block $B_i$, as follows.
If $B_i$ is an edge say $a_ic_i$, as $a_i$ has at least two neighbors $x_i,y_i\in V(C)$,
let $e_i=x_ia_i$ and $f_i=y_ia_i$.
Otherwise $B_i$ is 2-connected with $|V(B_i)|\geq 3$.
There are 3 disjoint paths from $B_i$ to $C$ in $G$ (as $G$ is 3-connected)
at most one of which uses the cut-vertex $c_i$,
so the other two paths must be two independent edges say $e_i=x_ia_i$ and $f_i=y_ib_i$ in $E(C,B_i-c_i)$.

\begin{lem}\label{lem:not 2 connected}
Let $k$ be the number of end-blocks in $H$. If $k\geq 2$, then there are at least $(m-k)(t+k)+\lceil \frac{k}{2}\rceil$ basic cycles in $G$.
\end{lem}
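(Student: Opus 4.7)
The strategy is to construct many good pairs of edges in $E(C,H)$ anchored at the staple edges of the $k$ end-blocks, then count the resulting basic cycles via Lemma~\ref{lem:block tree}. A key observation makes this sum legitimate: every basic cycle contains exactly two edges of $E(C,H)$ (namely its good pair), so basic cycles arising from distinct good pairs are automatically distinct, and summing contributions over any family of disjoint good pairs yields a valid lower bound.

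For the $\lceil k/2\rceil$ summand, I would pair up the $k$ end-blocks into $\lfloor k/2\rfloor$ pairs (plus a singleton if $k$ is odd). For each pair of end-blocks $(B_i, B_j)$, at least one of the four combinations $\{e_i,e_j\}$, $\{e_i,f_j\}$, $\{f_i,e_j\}$, $\{f_i,f_j\}$ is a good pair, because each end-block's staple edges have two distinct $C$-endpoints; this good pair yields at least one basic cycle via Lemma~\ref{lem:block tree} applied to the block-tree path between the two end-blocks. In the odd case, the singleton end-block contributes one extra basic cycle through its internal good pair $\{e_i, f_i\}$ (for which the trivial good path of length zero already suffices).

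For the $(m-k)(t+k)$ summand, I would designate one staple edge per end-block (say $f_i$) as a reserved anchor and regard the remaining $m-k$ edges of $E(C,H)$ as partners. For each partner edge $ya$, I pair it with a reserved anchor $f_j$ such that $\{ya, f_j\}$ is good; such $j$ always exists since the $k$ anchors offer enough freedom in their $C$-endpoints. Applying Lemma~\ref{lem:block tree} to such a pair provides a lower bound via the product $\prod_{B\in\mathcal{P}_{a,b_j}}(t(B)+1)$. By choosing the anchor $j$ to maximize the length of the block-tree path (so that it traverses as many blocks of $H$ as possible) and expanding the product to exploit both the total $t$-value along the path and the number of blocks traversed, I aim to produce at least $t+k$ basic cycles per partner, giving $(m-k)(t+k)$ in total.

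The main obstacle is the careful combinatorial accounting in the third step, since the naive bound $\sum_{B\in\mathcal{P}}t(B)+1$ from Lemma~\ref{lem:block tree} may be only $t+1$ for a single block-path and does not reach $t+k$ on its own. The extra $k-1$ has to be extracted either by summing over several valid anchors per partner (each added valid $j$ producing an independent basic cycle family) or by routing different partners through different anchors so that, in aggregate, every end-block contributes. A case analysis based on whether $a$ lies inside an end-block, at a cut-vertex, or in an internal block of $H$ is likely needed, together with a check that the pairings are simultaneously feasible.
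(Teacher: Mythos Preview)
Your plan is in the right neighbourhood, but the $(m-k)(t+k)$ count has a real gap, which you correctly flag as the main obstacle without resolving. The fix is not to pick a single best anchor per partner, nor to spread partners among anchors ``in aggregate''; it is to pair every edge of $E(C,H)$ with one staple edge from \emph{each} of the $k$ end-blocks simultaneously. Concretely, if $uv\in E(C,H)$ with $v\in V(H)$, then for each end-block $B_i$ at least one of its two staple edges has $C$-endpoint different from $u$, giving a good pair $\{uv,x_ia_i\}$ with $a_i\in V(B_i)-c_i$. Since the end-blocks are exactly the leaves of the block tree, the union $\bigcup_{i\in[k]}\mathcal{P}_{v,a_i}$ covers \emph{every} block of $H$, so summing the bound of Lemma~\ref{lem:block tree} over $i\in[k]$ gives at least $\big(\sum_{B\in\mathcal{B}}t(B)\big)+k=t+k$ good paths, hence $t+k$ basic cycles through $uv$. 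Your ``maximize one block-tree path'' idea cannot achieve this: when the block tree is a star with $k$ leaves, any single path meets only two blocks.

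Two further problems. First, your rigid split into $k$ fixed anchors $f_i$ and $m-k$ partners breaks down whenever a partner's $C$-endpoint coincides with $y_i$; you must be free to switch to $e_i$ for that end-block. The paper therefore does not reserve anchors at all. Instead it separates basic cycles by how many staple edges they use: the $m-2k$ non-staple edges each contribute $t+k$ basic cycles with exactly one staple edge via the argument above, while applying the same argument to each of the $2k$ staple edges and halving for double-counting gives $k(t+k)$ basic cycles with two staple edges, totalling $(m-2k)(t+k)+k(t+k)=(m-k)(t+k)$. Second, your $\lceil k/2\rceil$ construction risks reusing good pairs already consumed in the main count (pairs like $\{e_i,f_j\}$ can appear in both), and the singleton's internal pair $\{e_i,f_i\}$ is certainly already used. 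The paper obtains the extra $\lceil k/2\rceil$ from a different source: $3$-connectivity supplies an edge $g$ independent of both $e_i$ and $f_i$, so \emph{both} $\{g,e_i\}$ and $\{g,f_i\}$ are good, whereas the main count spent only one of them per end-block.
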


\begin{proof}
Let $B_1,B_2,...,B_k$ be all end-blocks in $H$.
Let $uv$ be a non-staple edge in $E(C,H)$ with $v\in V(H)$.
For each $B_i$, at least one of $e_i, f_i$ has an end-vertex in $V(C)-u$;
let $e_i=x_ia_i$ be such an edge with $a_i\in V(B_i)-c_i$ and thus $\{uv, x_ia_i\}$ is a good pair.
Since the block structure of $H$ is a tree,
the union of the $k$ paths $\mathcal{P}_{v,a_i}$ over $i\in [k]$ contains all blocks in $\dB$.
By Lemma \ref{lem:block tree} and Proposition \ref{prop:t}, there are at least $(\sum_{B\in \dB} t(B))+k=t+k$ distinct $(v,a_i)$-paths for all $i\in [k]$.
This gives $t+k$ basic cycles in $G$ using $uv$ and exactly one staple edge.
Since there are $m-2k$ non-staple edges in $E(C,H)$,
we have at least $(m-2k)(t+k)$ distinct basic cycles in $G$ using exactly one staple edge.

We then consider basic cycles with two staple edges.
For end-blocks $B_i,B_j$, we can always pair the four staple edges $e_i,f_i,e_j,f_j$ into two good pairs $\mathcal{A}_\ell$ for $\ell\in [2]$ with $|\mathcal{A}_\ell\cap \{e_i,f_i\}|=1$.
Thus each of the $2k$ staple edges (say $e_1$) appears in $k$ good pairs $\{e_1,g_j\}$ for $j\in [k]$, where $g_j$ is a staple edge of $B_j$.
Similarly as above, each staple edge is contained in at least $t+k$ basic cycles using two staple edges.
By double-counting, this gives at least $k(t+k)$ basic cycles using two staple edges.

Now consider the staple edges $e_i,f_i$ of each $B_i$.
As $G$ is 3-connected, there exists $g\in E(C,H)$ independent of $e_i,f_i$.
Thus $\{g,e_i\}$ and $\{g,f_i\}$ both are good pairs.
Note that such edge $g$ may be a staple edge or not, and we have only considered one good pair for $g$ in the above counting.
By double-counting (as $g$ can be a staple edge), we can get $\lceil\frac{k}{2}\rceil$ more good pairs,
which lead to $\lceil\frac{k}{2}\rceil$ more distinct basic cycles in $G$.
This lemma follows by adding all above basic cycles up.
\end{proof}

We make two remarks: (1) The odd cycle $C$ is not a basic cycle.
(2) Each basic cycle corresponds to a unique even cycle.
So Lemmas \ref{lem: 2-connected H} and \ref{lem:not 2 connected} give the same number of distinct even cycles in $G$.

\section{A short proof to Gallai's problem when $k=4$}

\begin{lem}\label{lem:3-con_simple}
Every 3-connected non-bipartite graph $G$ contains at least $2t(G)-2$ distinct odd cycles.
\end{lem}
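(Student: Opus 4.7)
The plan is to invoke Lemma~\ref{lem_nonseparate} with $G$ viewed as a signed graph under the all-ones signature (so non-bipartite means ``has an odd cycle'') to obtain a non-separating induced odd cycle $C$. Setting $H := G-C$, $t := t(H)$, and $m := |E(C,H)|$, the graph $H$ is connected and $t(G) = t + m$, so the target is at least $2(t+m)-2$ distinct odd cycles. Every basic cycle from Section~\ref{sec:3-conn} is odd and contains at least one vertex of $H$, so the basic cycles are all distinct from each other and from $C$. It therefore suffices to show that $1 + (\text{number of basic cycles})$ meets the bound, with help from a few extra odd cycles in the degenerate cases.

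I would split by the block structure of $H$. If $H$ is 2-connected, Lemma~\ref{lem: 2-connected H} yields $(t+1)m$ basic cycles, and $(t+1)m + 1 \geq 2t + 2m - 2$ rearranges to $(t-1)(m-2) + 1 \geq 0$, which is automatic since $t \geq 1$ (as $H$ is 2-connected) and $m \geq 3$ (as $G$ is 3-connected). If $H$ is not 2-connected but has $k \geq 2$ end-blocks, Lemma~\ref{lem:not 2 connected} yields $(m-k)(t+k) + \lceil k/2 \rceil$ basic cycles; since each end-block contributes two independent staple edges to $E(C,H)$, one has $m \geq 2k$, and a short algebraic check (the difference is monotone in $m$ and already non-negative at $m=2k$, reducing to $(k-2)t + (k-1)(k-3) + \lceil k/2 \rceil \geq 0$) delivers $(m-k)(t+k) + \lceil k/2 \rceil + 1 \geq 2t + 2m - 2$.

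The remaining situation, $k = 1$ with $H$ not 2-connected, is that $H$ is a single vertex or a single edge, which I handle by direct counting. If $H = \{v\}$, every unordered pair of edges from $v$ to $C$ is a good pair with a trivial good path, giving $\binom{m}{2}$ basic cycles and $\binom{m}{2} + 1 \geq 2m - 2$ for $m \geq 3$. If $H = u_1u_2$, writing $m_i = |N_C(u_i)|$ and $c = |N_C(u_1) \cap N_C(u_2)|$, the three kinds of good pairs yield $\binom{m_1}{2} + \binom{m_2}{2} + m_1 m_2 - c = \binom{m}{2} - c$ basic cycles; the $c$ triangles $xu_1u_2$ with $x \in N_C(u_1) \cap N_C(u_2)$ are odd cycles distinct from every basic cycle (they contain only one $C$-vertex) and from $C$ (they contain $u_1,u_2$), restoring the total to $\binom{m}{2} + 1 \geq 2m - 2$. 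The main obstacle I expect is this single-edge subcase, where the loss of $c$ coincident-endpoint pairs must be compensated; the trick of recovering them as triangles inside $H+C$ bypasses any delicate further appeal to 3-connectedness, and after that every other case is a routine arithmetic check.
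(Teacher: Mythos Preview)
Your proposal is correct and follows essentially the same route as the paper: fix a non-separating induced odd cycle $C$, set $H=G-C$, split according to whether $H$ is 2-connected, has $k\ge 2$ end-blocks, or is a single vertex/edge, and in each case count basic cycles (plus $C$, plus the triangles $xu_1u_2$ in the single-edge subcase) against $2(t+m)-2$. The only cosmetic differences are that the paper discards the $\lceil k/2\rceil$ term down to $+1$ before its algebraic check in the $k\ge2$ case, and phrases the single-edge subcase without introducing $m_1,m_2,c$ explicitly.
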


\begin{proof}
Following the notations in Section \ref{sec:3-conn},
let $C$ be an induced odd cycle in $G$ such that $G-C$ is connected.
Let $H=G-C$, $t=t(H)$ and $m=|E(C,H)|$.
Then we have $t(G)=t+m$.
If $H$ is 2-connected, then $t\geq 1$ and $m\geq 3$.
Since $(t+1)m-(2t(G)-2)=(t-1)(m-2)\geq 0$,
by Lemma \ref{lem: 2-connected H}, $G$ contains at least $(t+1)m\geq 2t(G)-2$ odd cycles.
So $H$ is not 2-connected.
Let $k$ be the number of end-blocks in $H$.

If $k\geq 2$, then $m\geq 2k\geq k+2$ and thus $(m-k)(t+k)+2=((m-k-2)+2)((t+k-2)+2)+2\ge 2(m-k-2)+2(t+k-2)+6=2t(G)-2$.
By Lemma \ref{lem:not 2 connected} (plus the cycle $C$), $G$ contains at least $(m-k)(t+k)+2\geq 2t(G)-2$ odd cycles.
It remains to consider $k=1$, that is, $H$ is an isolated vertex or an edge.
If $H$ is a vertex, then every two edges in $E(C,H)$ form a good pair.
If $H$ is an edge $ab$, then any non-good pair in $E(C,H)$ must be $\{ax,bx\}$ for some $x\in V(C)$, which also defines a triangle $abx$.
Hence in either case, it holds that $t=0$, $t(G)=m$ and any pair in $E(C,H)$ contributes a distinct odd cycle in $G$.
Adding the cycle $C$, there are at least $\binom{m}{2}+1=\frac{1}{2}t(G)(t(G)-1)+1\geq 2t(G)-2$ odd cycles in $G$,
where the inequality holds as $t(G)\geq |V(G)|/2+1\geq 2$.
This completes the proof.
\end{proof}

Now we are ready to prove Theorem \ref{Thm:4-cri-weak}.

\medskip

{\noindent \bf Proof of Theorem \ref{Thm:4-cri-weak}.}
Let $G$ be a 4-critical graph on $n$ vertices.
We prove $f(G)\geq 2t(G)-2$ by induction on $n$. It is clear that if $n=4$, then $G=K_4$ has exactly 4 odd cycles.
So we may assume that this holds for all 4-critical graphs with at most $n-1$ vertices.

Clearly $G$ is 2-connected and non-bipartite.
If $G$ is 3-connected, then Lemma \ref{lem:3-con_simple} implies $f(G)\geq 2t(G)-2$.
So we may assume that there exists a 2-cut $\{u,v\}$ in $G$.
By Lemma \ref{lem:2-cut}, $uv\notin E(G)$ and there exist induced subgraphs $G_1$ and $G_2$ of $G$
such that $G=G_1\cup G_2$, $V(G_1)\cap V(G_2)=\{u,v\}$, and $H_1:=G_1+uv$ and $H_2:=G_2/\{u,v\}$ are 4-critical.
Also $u,v$ have no common neighbor in $G_2$, so $e(H_2)=e(G_2)$, from which we can derive that $t(H_1)+t(H_2)=t(G)+1$.

We claim that both $G_1$ and $G_2$ contain two $(u,v)$-paths of different parities.
Since $H_1$ is 4-critical and thus 2-connected, there exist an odd cycle $C$ not containing $u$ and two disjoint paths from $u,v$ to $C$ in $H_1$ (also in $G_1$).
Then we can easily get two $(u,v)$-paths of different parities in $G_1$.
Similarly, $H_2$ has an odd cycle $D$ avoiding the new vertex contracted from $\{u,v\}$.
There are two disjoint paths from $u,v$ to $D$ in the 2-connected $G$.
Clearly these paths are also contained in $G_2$. Thus we can get two $(u,v)$-paths of different parities in $G_2$.

Suppose that the numbers of $(u,v)$-paths of even length in $G_1,G_2$ are $a,c$, and the numbers of $(u,v)$-paths of odd length in $G_1,G_2$ are $b,d$, respectively.
By induction $f(H_i)\geq 2t(H_i)-2$ for each $i\in \{1,2\}$.
Then $G_1$ has $f(H_1)-a$ odd cycles and $G_2$ has $f(H_2)-d$ odd cycles.
In total $G$ has at least $(f(H_1)-a)+(f(H_2)-d)+ad+bc$ odd cycles.
We know $a,d,b,c\geq 1$. So $ad+bc-a-d\ge (a-1)(d-1)+bc-1\geq 0$.
Thus $f(G)\geq f(H_1)+f(H_2)\geq (2t(H_2)-2)+(2t(H_2)-2)=2t(G)-2$.

By Theorem \ref{Thm:KY}, we have $f(G)\geq 2t(G)-2=2e(G)-2n\geq \frac{4}{3}(n-1)\geq n$,
with equality if and only if $n=4$ and $G=K_4$.
This completes the proof of Theorem \ref{Thm:4-cri-weak}.
\qed

\section{Counting cycles with parity via ear-decompositions}\label{sec:ear-decom}
In this section we prove two lemmas for counting cycles of specified parities
passing through a given vertex or a given edge in 3-connected non-bipartite (signed) graphs.
The key idea is to choose some ear-decomposition with particular properties,
based on a prefixed non-separating induced odd cycle.

\begin{lem}\label{lem:EarDecom-Fix-Vertex}
Let $G$ be a 3-connected non-bipartite signed graph,
$x$ be a vertex in $G$, and $D$ be a non-separating induced odd cycle in $G$ such that $x\notin V(D)$.
Let $R_i$ for $i\in [3]$ be three disjoint paths from $x$ to $z_i\in V(D)$ with $xy_i\in E(R_i)$.

Suppose there exists an edge-coloring $f$ assigning colors to every edge incident to $x$ such that $f(xy_i)$ for $i\in [3]$ are distinct.
Then $G$ contains at least $t(G)$ cycles of each parity passing through $x$ such that
the two edges incident to $x$ in every such cycle have different colors assigned by $f$.
\end{lem}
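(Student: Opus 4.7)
The plan is to ear-decompose $G$ starting from $D$ and arrange the first two ears to be $R_1\cup R_2$ (a $(z_1,z_2)$-path through $x$) and $R_3$ (a $(z_3,x)$-path), producing the $K_4$-subdivision $G^*:=D\cup R_1\cup R_2\cup R_3$ with $t(G^*)=3$; then extend $G^*$ to $G$ via $t(G)-3$ further ears $P_1,\ldots,P_{t(G)-3}$, setting $G_0:=G^*$ and $G_\ell:=G_{\ell-1}\cup P_\ell$. The aim is to count three odd and three even cycles through $x$ already in $G^*$, plus one additional cycle of each parity contributed by each further ear, for a total of $3+(t(G)-3)=t(G)$ cycles of each parity.

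For the seed count, observe that since $D$ is odd the two $(z_i,z_j)$-arcs of $D$ have opposite parities. Thus for each of the three pairs $\{i,j\}\subset [3]$, the two cycles $R_i\cup R_j\cup(\text{arc of }D)$ pass through $x$, use the two edges $xy_i,xy_j$ at $x$, and are of opposite parities. Since $f(xy_1),f(xy_2),f(xy_3)$ are distinct, the color condition is automatic, and one gets three odd and three even seed cycles through $x$ in $G^*$.

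The main work is to show that each subsequent ear $P_\ell$ contributes one new odd and one new even cycle through $x$ in $G_\ell$ containing an edge of $P_\ell$ and satisfying the color condition. Since internal vertices of an ear are new while $x\in V(G_{\ell-1})$, $x$ can appear in $P_\ell$ only as one of its endpoints; hence any cycle through $x$ using $P_\ell$ has the form $P_\ell\cup Q$, where $Q$ is either a $(u_\ell,v_\ell)$-path through $x$ in $G_{\ell-1}$ (if $x\notin V(P_\ell)$) or a $(v_\ell,x)$-path in $G_{\ell-1}$ (if $u_\ell=x$). Two structural facts drive the construction of $Q$. First, $G^*-x$ is connected, because it is $D$ with at most three pendant subpaths attached at $z_1,z_2,z_3$, and a straightforward induction shows $G_{\ell-1}-x$ remains connected as each new ear is added. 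Second, $G_{\ell-1}$ is $2$-connected and non-bipartite, since it contains the odd cycle $D$. A Menger-style argument inside $G_{\ell-1}-x$, together with the three-fold freedom to choose indices from $\{1,2,3\}$, lets us route $Q$ to use two distinct edges of $\{xy_1,xy_2,xy_3\}$ at $x$; in the endpoint case we instead select a single $xy_a$ whose $f$-color differs from that of the first edge of $P_\ell$ at $x$, which is possible since among the three distinct values $f(xy_1),f(xy_2),f(xy_3)$ at most one is forbidden. This yields one cycle through $x$ in $G_\ell$ satisfying the color condition. A parity-flipping detour that re-routes a short segment of $Q$ around the odd cycle $D$, which lives inside $G_{\ell-1}-x$, then produces a second path $Q'$ of the opposite parity with the same edges at $x$, and $P_\ell\cup Q$ and $P_\ell\cup Q'$ provide the two desired cycles attributed to $P_\ell$.

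Cycles attributed to different ears are automatically distinct, since each uses an edge of its own ear absent from any earlier $G_j$; and the seed cycles of $G^*$ use only $G^*$-edges and are thus distinct from every ear cycle. Summing yields at least $t(G)$ cycles of each parity through $x$ satisfying the color condition, as required. The main obstacle will be carrying out the Menger-style path selection and the parity-flipping detour so that $Q$ remains a simple path with exactly the right two edges at $x$, in particular when $G_{\ell-1}-x$ lacks $2$-connectivity or when $u_\ell,v_\ell$ lie close to $x$; the three-fold flexibility provided by the distinct $f$-colors on $xy_1,xy_2,xy_3$ and the presence of the odd cycle $D$ inside $G_{\ell-1}-x$ are the essential structural ingredients that should make both steps go through.
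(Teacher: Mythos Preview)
Your overall architecture matches the paper's: ear-decompose with $p_1=D$, $p_2=R_1\cup R_2$, $p_3=R_3$; harvest three cycles of each parity from $G^*=D\cup R_1\cup R_2\cup R_3$; then extract one cycle of each parity from every further ear, with distinctness coming from the new ear edges. The seed count and the distinctness argument are correct.

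The gap is in the ear step, exactly where you flag ``the main obstacle''. Your plan is to form the cycle as $P_\ell\cup Q$ with $Q$ a $(u_\ell,v_\ell)$-path through $x$ in $G_{\ell-1}$, and then to flip parity by ``re-routing a short segment of $Q$ around $D$''. But your $Q$ need not meet $D$ in any controlled way (it may miss $D$, hit it at many vertices, or use its edges), so there is no arc of $D$ to swap while keeping a simple path with the prescribed edges at $x$; and your Menger argument in $G_{\ell-1}-x$ is unsupported, since that graph need not be $2$-connected (already $G^*-x$ is just $D$ with three pendant stubs). The paper resolves both issues by reorganising the target. First it constrains the ear-decomposition so that every ear $p_i$ with $i\ge 3$ has at least one endpoint \emph{outside} $D$ --- possible because $D$ is induced and non-separating in $G$ --- which keeps $D$ non-separating in each $G_i$. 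Then, instead of seeking a $(u_\ell,v_\ell)$-path through $x$, it uses two disjoint paths in the $2$-connected $G_{i-1}$ (one of them forced, via the non-separating property of $D$, to land off $D$) to extend $p_i$ to a path $Q_i$ internally disjoint from $G^*$ with both ends in $G^*$ and at most one end on $D$; a short case check then extends $Q_i$ along suitable $R_j$'s to a path $Q_i'$ with \emph{both ends on $D$}, passing through $x$, whose two edges at $x$ carry different $f$-colors. Now the parity flip is automatic: the two arcs of $D$ between the endpoints of $Q_i'$ have opposite parities. The missing idea is to route the extended ear so that its ends lie on $D$, rather than to build a $(u_\ell,v_\ell)$-path and patch afterwards, so that $D$ itself supplies the parity switch.
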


\begin{proof}
Let $t=t(G)$. We claim that there is an ear-decomposition $p_1\cup p_2\cup ...\cup p_t$ of $G$
such that $p_1=D, p_2=R_1\cup R_2, p_3=R_3$ and for each $i\geq 3$,
at least one of the ends of $p_i$ is not in $D$ and thus $D$ is non-separating in $G_i:=\cup_{j=1}^i p_j$.
To see this, suppose we already get desired ears $\{p_j\}_{1\leq j\leq i-1}$ for some $4\leq i\leq t$;
since $D$ is induced and non-separating in $G$, one can always find a new ear $p_i$ (a single edge or not) internally disjoint from $G_{i-1}$ with one end not in $D$.
For $i\geq 4$, let the ends of $p_i$ be $u_i, v_i$ with $v_i\notin V(D)$.
Since $D$ is non-separating in $G_{i-1}$, there is a path $L$ in $G_{i-1}-D$ from $v_i$ to $w\in V(R_1\cup R_2\cup R_3)-V(D)$.
As $G_{i-1}$ is 2-connected, there are two disjoint paths $L_1,L_2$ in $G_{i-1}$ from $\{v_i,u_i\}$ to $D\cup R_1\cup R_2\cup R_3$.
By concatenating with the path $L$ and renaming if necessary, we may assume that the end of $L_1$ other than $\{u_i,v_i\}$ is the vertex $w$ defined above.
Now we see that for each $i\geq 4$, there exists a path $Q_i:=p_i\cup L_1\cup L_2$ in $G_i$ containing the ear $p_i$ and internally disjoint from $D\cup R_1\cup R_2\cup R_3$,
where both ends are in $D\cup R_1\cup R_2\cup R_3$ but at most one is in $D$.

We observe that it will suffice to extend $Q_i$ to a path $Q_i'$ in $G_i$ with both ends in $D$ passing through $x$ such that its two edges incident to $x$ have different colors assigned by $f$.
Indeed, if true, then since $D$ is odd, by adding one of the two paths between two ends of $Q_i'$ in $D$ to $Q_i'$,
we can get a desired cycle of each parity for every $4\leq i\leq t$.
Since $p_i\subseteq Q_i'\subseteq G_i$, this provides $t-3$ distinct such cycles.
Also $D\cup R_1\cup R_2\cup R_3$ contains three desired cycles of each parity, so the lemma follows.

Finally we show how to extend $Q_i$ to $Q_i'$ in $G_i$.
This can be verified by considering all possible locations of the ends $w,w'$ of $Q_i$ in $D\cup R_1\cup R_2\cup R_3$.
Note that at least one of $w,w'$ is not in $V(D)$.
In case that $w,w'\in V(D\cup R_1\cup R_2\cup R_3)-x$, we omit the straightforward clarifications.
So it remains to consider when $x\in \{w,w'\}$ (say $x=w'$).
Let $xy\in E(Q_i)$ and by symmetry, $w\notin V(R_1\cup R_2)$.
There exists some $j\in [2]$ such that $f(xy_j)\neq f(xy)$.
If $w\in V(D)$, then $Q_i'$ can be chosen as $Q_i\cup R_j$;
otherwise $w\in V(R_3)$, then $Q_i'$ can be chosen as $z_3R_3w\cup Q_i\cup R_j$.
This completes the proof.
\end{proof}

\begin{lem}\label{lem:EarDecom-Fix-Edge}
Let $x, y$ be two distinct vertices in a 3-connected graph $G$ such that both $G-x$ and $G-y$ are non-bipartite.
Then $G$ contains at least $t(G)-1$ distinct $(x,y)$-paths of each parity (not including the possible edge $xy$).
\end{lem}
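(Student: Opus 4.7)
The plan is to adapt the ear-decomposition technique of Lemma~\ref{lem:EarDecom-Fix-Vertex}, replacing ``a cycle through $x$'' by ``an $(x,y)$-path''. The key idea is to root the ear decomposition at a non-separating induced odd cycle avoiding $y$, attach three disjoint paths from $y$ to that cycle as the first three ears, and then use the odd cycle to switch the parity of $(x,y)$-paths generated by each subsequent ear.

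Since $G$ is 3-connected and $G - y$ is non-bipartite, Lemma~\ref{lem_nonseparate} applied with $s = y$ yields a non-separating induced odd cycle $D$ in $G$ with $y \notin V(D)$. Let $t = t(G)$. By Menger's theorem there exist three internally disjoint paths $R_1, R_2, R_3$ from $y$ to distinct vertices $z_1, z_2, z_3 \in V(D)$, each internally disjoint from $V(D)$. Set $F = D \cup R_1 \cup R_2 \cup R_3$ and let $p_1 = D$, $p_2 = R_1 \cup R_2$, $p_3 = R_3$. Exactly as in the inductive argument in the proof of Lemma~\ref{lem:EarDecom-Fix-Vertex}, I extend this to an ear decomposition $p_1, \ldots, p_t$ of $G$ such that, for every $i \ge 4$, at least one end of $p_i$ lies outside $V(D)$; consequently $D$ remains non-separating in each $G_i := \bigcup_{j \le i} p_j$.

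For every $i \ge 4$, the same 2-connectivity argument as in Lemma~\ref{lem:EarDecom-Fix-Vertex} produces a path $Q_i \subseteq G_i$ that contains the ear $p_i$, is internally disjoint from $F$, and has its two endpoints in $V(F)$ with at most one of them on $V(D)$. I then extend $Q_i$ to an $(x,y)$-path $Q'_i \subseteq G_i$ by walking one endpoint along an appropriate $R_j$ down to $y$ and walking the other endpoint along some $R_k$ together with an arc of $D$ to reach $x$. Since $D$ is odd, swapping to the other arc of $D$ between the chosen pair of vertices on $D$ flips the parity of $Q'_i$, so each ear $p_i$ with $i \ge 4$ gives at least one $(x,y)$-path of each parity; distinctness across ears is automatic because each $Q'_i$ contains the ear $p_i$, which appears in no other $Q'_j$.

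Counting the $t-3$ $(x,y)$-paths of each parity produced from the ears $p_4, \ldots, p_t$ plus at least two further paths of each parity obtained directly from the initial structure $F$ (via a short case analysis on the position of $x$ relative to $D$ and the $R_i$'s), one obtains at least $(t-3) + 2 = t-1$ distinct $(x,y)$-paths of each parity, and by choice at most one of them is the edge $xy$ itself, which the lemma permits us to discard. The main obstacle will be the case $x \notin V(F)$: one must then incorporate $x$ into an early ear without destroying the non-separating property of $D$, and it is precisely here that the hypothesis that $G-x$ is also non-bipartite enters --- by running the analogous argument symmetrically with a non-separating induced odd cycle avoiding $x$ (again via Lemma~\ref{lem_nonseparate}), one handles the remaining positions of $x$.
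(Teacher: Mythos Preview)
The central gap is the case $x \notin V(F)$, and your proposed fix via symmetry does not close it. If you switch to a non-separating odd cycle $D'$ avoiding $x$ and three paths $R'_1,R'_2,R'_3$ from $x$ to $D'$, the new structure $F'=D'\cup R'_1\cup R'_2\cup R'_3$ need not contain $y$; you are then in exactly the same situation with the roles of $x$ and $y$ reversed, and nothing guarantees that at least one of $F,F'$ contains both vertices. An ear decomposition rooted at $F'$ produces paths from $x$ to $D'$, not $(x,y)$-paths, and the two decompositions cannot simply be merged. So ``run the symmetric argument'' is not a repair here; it just restates the problem.

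The paper resolves this with a case split your outline is missing. If $G-\{x,y\}$ is bipartite, then every odd cycle avoiding $y$ is forced to contain $x$, so $x\in V(D)$ automatically and a version of your scheme goes through (the paper in fact trims the decomposition so that $y$ has degree~$2$, and then recovers the shortfall using the remaining edges at $y$ together with a second non-separating odd cycle through $x$ but not $y$). If $G-\{x,y\}$ is non-bipartite, the paper invokes Theorem~\ref{thm:KT} to pick a non-separating induced odd $D$ with $\{x,y\}\cap V(D)=\emptyset$, and builds the initial ears from \emph{four} paths $P_1,P_2$ (from $x$) and $P_3,P_4$ (from $y$) to $D$ with controlled pairwise intersections, plus the ear $xy$. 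With both $x$ and $y$ anchored outside $D$ from the outset, each later ear $p_i$ can be extended to two disjoint paths from $x$ and from $y$ to distinct vertices of $D$, and the odd cycle then supplies both parities. Your setup, with three paths all emanating from $y$ only, has no mechanism to reach $x$ once $x\notin V(F)$; the hypothesis that $G-x$ is non-bipartite is used not to launch a second unrelated decomposition, but to force $x$ onto $D$ in the bipartite case and to guarantee the existence of the four-path configuration in the non-bipartite case.
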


\begin{proof}
Let $H$ be obtained from $G$ by adding the edge $xy$ and let $t=t(H)$.
Then $H$ inherits all propositions of $G$ with $t(G)\leq t\leq t(G)+1$.

First we consider that $H-\{x,y\}$ is bipartite.
By Theorem \ref{thm:KT} (or Lemma \ref{lem_nonseparate}), we see that there exists a non-separating induced odd cycle $D$ in $H$ with $x\notin V(D)$.
Since $H-\{x,y\}$ is bipartite, such $D$ must contain $y$.
There exist two disjoint paths $P_1,P_2$ from $x$ to $D$ in $H-y$ internally disjoint from $D$.
Let $H'$ be obtained from $H$ by deleting all edges incident to $y$ except the two edges (say $yu,yv$) in $D$.
So $H'$ is 2-connected and $D$ is still non-separating in $H'$.
We can find an ear-decomposition $p_1\cup...\cup p_m$ in $H'$ such that
$p_1=D$, $p_2=P_1\cup P_2$ and for each $i\geq 3$, at least one end of $p_i$ is not in $D$, where $m=t(H')$.
So for $i\geq 3$, $D$ is non-separating in $H_i:=\cup_{i=1}^i p_j$.
By similar analysis as before, there exists a path $Q_i$ in $H_i$ containing the ear $p_i$ from $x$ to some vertex in $D-y$,
which can be extended to an $(x,y)$-path of each parity in $H_i$ containing $p_i$ for each $i\geq 3$.
Adding two such paths in $p_1\cup p_2$, we get $m$ desired $(x,y)$-paths in $H'$.
Also by Theorem \ref{thm:KT}, there exists a non-separating induced odd cycle $D'$ in $H$ with $x\in V(D')$ and $y\notin V(D')$.
Note that there are at least $t(G)-m-1$ edges $yz$ in $E(H)-E(H')$ for $z\notin \{u,v,x\}$.
We claim that for each such edge $yz$, there exists a path in $H$ from $y$ to some vertex in $D'-x$ which uses $yz$.
This is clear if $z\in V(D')$; for $z\notin V(D')$, since $H$ is 3-connected,
there exists a path in $H-\{x,y\}$ from $z$ to $D'-x$, from which the claim holds.
Using this claim, it is easy to find at least $t(G)-m-1$ many $(x,y)$-paths in $G$ of each parity,
which are also distinct from the above $m$ paths.
This finishes the proof when $H-\{x,y\}$ is bipartite.

Now we may assume that $H-\{x,y\}$ contains an odd cycle.
By Theorem \ref{thm:KT} there exists a non-separating induced odd cycle $D$ in $H$ such that $H-D$ contains $xy$.
We claim that there are four paths $P_1,P_2,P_3,P_4$ in $H$ from $\{x,y\}$ to $D$ such that
\begin{itemize}
\item [(a).] $x$ is an end of $P_1, P_2$ and $y$ is an end of $P_3, P_4$,
\item [(b).] any $P_i, P_j$ are internally disjoint, with at most one exception that $\{i,j\}=\{2,4\}$, and
\item [(c).] if $P_2$ and $P_4$ intersect, then $P_2=P_2'\cup R$ and $P_4=P_4'\cup R$ such that $P_2',P_4',R$ are internally disjoint paths and $x,y\notin V(R)$.
\end{itemize}
To prove this, since $H$ is 3-connected, we begin by choosing three internally disjoint paths $P_1,P_2,R$
in $H$ from $x,x,y$ to $a,b,c\in V(D)$, respectively.
There are also two disjoint paths $P_3, P_4$ in $H-x$ from $y$ to $D\cup P_1\cup P_2-x$, which are internally disjoint from $D\cup P_1\cup P_2$.
By concatenating $P_3, P_4$ with the path $R$ and renaming if necessary,
we may assume that $P_3$ is from $y$ to $c\in V(D)$ and by symmetry (between $P_1$ and $P_2$), $P_4$ is from $y$ to $D\cup P_2$ .
This proves the claim.

Next we build an ear-decomposition $p_1\cup ...\cup p_t$ of $H$ such that
$p_1=D, p_2=P_1\cup P_2, p_3=P_3\cup P_4$ (in case $P_2$ and $P_4$ interest, let $p_3=P_3\cup P_4'$), $p_4=xy$,
and for each $i\geq 5$, at least one end of $p_i$ is not in $D$ and $x, y$ cannot be the two ends of $p_i$.
The construction is similar as in the previous lemma (following the facts that $D$ is induced and non-separating in $H$ and $\{x,y\}$ is not a 2-cut of $H$),
and we omit the details here.
Let $H_i:=\cup_{j=1}^i p_j$ and $A$ be the vertex set of $p_1\cup ...\cup p_4$.

For fixed $i\geq 5$, let the ends of $p_i$ be $u, v$ with $v\notin V(D)$.
Since $H_{i-1}$ is 2-connected, $D$ is non-separating in $H_{i-1}$ and $\{x,y\}$ is not a 2-cut in $H_{i-1}$,
there exist two disjoint paths $L_1, L_2$ in $H_{i-1}$ from $\{u,v\}$ to $\{w_1, w_2\}\subseteq A$ and internally disjoint from $A$
such that $w_1\notin V(D)$ and $\{w_1,w_2\}\neq \{x,y\}$.
So $Q_i=p_i\cup L_1\cup L_2$ is a $(w_1,w_2)$-path in $H_i$ containing the ear $p_i$.
By distinguishing between all possible locations of $w_1,w_2$ in $A$,
it can be verified that there exist two disjoint paths $X_i, Y_i$ in $H_i$
from $x,y$ to two distinct vertices in $D$ such that $Q_i\subseteq X_i\cup Y_i$.
Since $D$ is odd, this provides an $(x,y)$-path of each parity in $H_i$ containing $p_i$ for every $5\leq i\leq t$.
So we get $t-4$ desired paths.
Also observing that $p_1\cup p_2\cup p_3$ contains at least three $(x,y)$-paths of each parity (not including the edge $xy$),
we see that $G$ has at least $t-1\geq t(G)-1$ desired $(x,y)$-paths.
This completes the proof.
\end{proof}

We remark that in Lemma \ref{lem:EarDecom-Fix-Edge}, if $xy$ is an edge then $G$ contains at least $t(G)-1$ distinct cycles of each parity passing through $xy$.

\section{Proof of Theorem \ref{thm:G=3-con}}

\begin{thm}\label{thm:3-con-bound-maxdeg}
Let $G$ be a 3-connected non-bipartite signed graphs with maximum degree at most $0.2t(G)$.
Then $f(G)\geq 0.02 t^2(G)$.
\end{thm}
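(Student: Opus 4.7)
My plan is to apply Lemma~\ref{lem:EarDecom-Fix-Edge} to every edge of $G$ and turn the resulting bound into a quadratic count via double counting, exploiting the max-degree hypothesis $\Delta(G)\le 0.2\,t(G)$ to drive the bound from linear to quadratic.

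Call a vertex $v$ \emph{good} if $G-v$ is non-bipartite, and an edge $xy$ good if both endpoints are good. A bad vertex must lie on every odd cycle of $G$, so 3-connectivity together with Lemma~\ref{lem_nonseparate} forces the set of bad vertices to have constant size; hence all but $O(\Delta(G)) = O(t(G))$ edges are good. For each good edge $e$, the remark following Lemma~\ref{lem:EarDecom-Fix-Edge} gives at least $t(G)-1$ distinct odd cycles through $e$. Summing over good edges,
\[
\sum_{\text{odd } C} |E(C)| \;=\; \sum_{e\in E(G)}|\{C \text{ odd} : e\in E(C)\}| \;\ge\; (m-O(t(G)))\,(t(G)-1).
\]
Combined with the trivial bound $|E(C)|\le n$, this already yields $f(G)\ge (1-o(1))\,m(t(G)-1)/n$.

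Next I translate the max-degree hypothesis into structural control. The handshake lemma gives $2m\le 0.2\,t(G)\,n$, so $n\ge 10m/t(G)$; combined with the identity $m=n+t(G)-1$, this pins down the feasible triples $(n,m,t(G))$. When $n$ is not too large relative to $t(G)$---roughly, $n$ of order at most $t(G)/50$, i.e.\ the ``dense'' regime---a direct verification converts the above estimate into $f(G)\ge 0.02\,t(G)^2$.

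The main obstacle is the complementary ``sparse'' regime in which $n$ is large and the crude bound $|E(C)|\le n$ is too lossy: there the direct edge-counting yields only $\Omega(t(G))$ cycles. To handle it, I would complement the edge count above with the vertex count coming from Lemma~\ref{lem:EarDecom-Fix-Vertex}: applied at each good vertex, with all edges at that vertex given distinct colours, it produces at least $t(G)$ odd cycles through that vertex, giving $\sum_{C}|V(C)|\ge (n-O(1))\,t(G)$. A refined averaging that splits odd cycles by length should then show that at least $\Omega(t(G)^2)$ of them are short, since the small max-degree prevents a handful of long odd cycles from absorbing the whole weight in $\sum_C|E(C)|$ and $\sum_C|V(C)|$. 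Executing this averaging cleanly, and in particular pinning down the constant $0.02$, will be the technical crux of the proof.
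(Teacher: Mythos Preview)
Your double-counting scheme cannot close, and the gap is not a matter of constants. Take any $3$-regular $3$-connected non-bipartite (signed) graph on $n\ge 28$ vertices: then $t(G)=n/2+1$ and $\Delta(G)=3\le 0.2\,t(G)$, so the hypothesis holds. Here $m=\tfrac32 n$, $n\approx 2t(G)$, and your estimate
\[
f(G)\ \ge\ \frac{(m-O(t))(t-1)}{n}\ =\ (t-1)\Bigl(1+\frac{O(t)}{n}\Bigr)\ =\ \Theta(t)
\]
is only linear in $t$. So the ``dense regime'' in which your bound is quadratic is not $n\le t/50$; it is essentially $n=O(1)$, and the sparse case $n=\Theta(t)$ is the generic one, not an edge case. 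Moreover, Lemma~\ref{lem:EarDecom-Fix-Vertex} adds nothing here: for a cycle $|V(C)|=|E(C)|$, so the vertex sum $\sum_C |V(C)|$ is literally the same quantity you already bounded via edges. Your proposed ``refined averaging that splits odd cycles by length'' would have to rule out the scenario of $\Theta(t)$ near-Hamiltonian odd cycles each covering almost every edge, which is entirely consistent with both incidence bounds; the small-max-degree hypothesis says nothing about cycle lengths and does not forbid this. (There is also a side issue: the claim that $3$-connectivity plus Lemma~\ref{lem_nonseparate} forces $O(1)$ bad vertices is not argued. What one can extract is that bad vertices are pairwise adjacent, hence form a clique of size at most $\Delta+1$; Lemma~\ref{lem_nonseparate} only produces a cycle once you already know a vertex is good.)

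For comparison, the paper does not use incidence counting at all. It runs an iterative reduction: at each step it locates a non-separating induced odd cycle $C$ with $|E(C,G-C)|$ small (Claims~\ref{claim:M0}--\ref{claim:t>0.6T} use $\Delta\le 0.2t$ precisely to force $|E(C,H)|\le 0.2T$ and to locate a dominant $2$-connected block $B_1$ in $H=G-C$), and then contracts $C$ (and the small pieces of $H$ outside $B_1$) to obtain a smaller $3$-connected non-bipartite signed graph $G'$. The heart of the proof is the bookkeeping showing $f(G)-f(G')\ge \tfrac12 T\cdot(t(G)-t(G'))$ at every step, with Lemmas~\ref{lem: 2-connected H}, \ref{lem:not 2 connected}, \ref{lem:EarDecom-Fix-Vertex} invoked inside these claims to count the odd cycles lost under contraction. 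Summing the telescoping inequality over the sequence gives $f(G)\ge 0.02T^2$. The max-degree hypothesis is used structurally (to control how much $t$ drops per step and to keep the sequence inside the class), not via averaging, and this is exactly the missing idea in your plan.
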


\begin{proof}
Throughout this proof, let $T=t(G)$ and $\dG_T$ be the family of all 3-connected non-bipartite signed graphs with maximum degree at most $0.2T$.
So $G\in \dG_T$. We will show $f(G)\ge 0.02T^2$.
Our plan is to construct a sequence of signed graphs $G_0, G_1,...,G_q$ with the following properties:
\begin{itemize}
	\item [(\romannumeral1).] $G_i\in \dG_T$ for each $i\geq 0$, where $G_0=G$, and
	\item [(\romannumeral2).] For each $i\geq 1$, $f(G_{i-1})-f(G_{i })\geq \frac{1}{2}T\cdot (T_{i-1}-T_i)$ and $1\leq T_{i-1}-T_i\leq 0.4T$, where $T_i=t(G_i)$.
\end{itemize}
We will recursively define $G_i$ based on $G_{i-1}$ (the details will be given below),
and this process will terminate whenever the new $G_i$ satisfies either $T_i<0.8T$ or $T_i\geq 0.8T$ and $f(G_i)\geq 0.02T_i^2$.

Before defining these $G_i$'s, let us show how this desired sequence implies the conclusion.
If this process terminates at $G_q$ when $T_q\geq 0.8T$ and $f(G_q)\geq 0.02T_q^2$,
then by (\romannumeral2) we have
$$f(G)=f(G_q)+\sum_{i=1}^q (f(G_{i-1})-f(G_i))\geq 0.02T_q^2+\frac{1}{2}T\cdot (T-T_q)\geq 0.02 T^2.$$
Otherwise it terminates when $T_q<0.8T$, then by (\romannumeral2) we can also get $f(G)\geq \frac{1}{2}T\cdot (T-T_q)\geq 0.02 T^2$.

Now suppose for some $s\geq 0$, we have defined $G_i$'s for all $0\leq i\leq s$ as required.
We may assume
\begin{equation}\label{equ:Gs}
T_s\geq 0.8T \text{~~ and ~~} f(G_s)<0.02 T_s^2.
\end{equation}
In the rest of the proof, as we demonstrate, it suffices to define $G_{s+1}$ satisfying (\romannumeral1) and (\romannumeral2).
In steps to construct $G_{s+1}$, we will define several intermediate signed (multi-)graphs $M_\ell$ for $0\leq \ell\leq 3$.\footnote{For a multi-graph $M$, its {\it underlying graph} is a simple graph obtained from $M$ by deleting certain edges
so that only one edge of each adjacent pair of vertices remains. We say $M$ is $k$-connected (or bipartite) if and only if its underlying graph is so.
For a signed multi-graph $M$, let $f(M)$ be the number of all distinct odd cycles (of length at least three) in $M$.}

First we construct $M_0$ from $G_s$ as following.
Since $G_s\in \dG_T$, by Lemma \ref{lem_nonseparate} there exists a non-separating induced odd cycle $C$ in $G_s$.
If $|E(C,G_s-C)|\geq 4$, we simply define $M_0=G_s$.
Now consider $|E(C,G_s-C)|=3$. As $G_s$ is 3-connected and $C$ is induced,
we see that $C$ is a triangle say $xyzx$ and $E(C,G_s-C)$ consists of three independent edges say $xa,yb,zc$.
Now let $M_0$ be obtained from $G_s$ by deleting the vertex $z$, adding two new edges $xc, yc$, and assigning the parities of $xzc, yzc$ of $G_s$ to $xc, yc$, respectively.
In this case we will also rename $C$ by $xycx$ in $M_0$.

\begin{claim}\label{claim:M0}
$M_0$ is a 3-connected non-bipartite signed graph with maximum degree at most $0.2T+1$
and there exists a non-separating induced odd cycle $C$ in $M_0$ such that $|E_{M_0}(C,M_0-C)|\geq 4$, $t(M_0)=T_s$ and $f(G_s)\geq f(M_0)$.
Moreover, the only possible vertices of degree $0.2T+1$ belong to $C$.
\end{claim}
\begin{proof}
This is clear when $M_0=G_s$.
By the definition of $M_0$, we may assume that there exists an odd cycle $xyzx$ in $G_s$ and $E^*=E(xyz,G_s-xyz)$ consists of three independent edges $xa,yb,zc$.
By \eqref{equ:Gs}, $G_s\neq K_4$.
If $G_s-xyz$ is not 2-connected, then $G_s-xyz$ either is an edge or has at least two end-blocks;
in either case, it implies at least four edges in $E^*$, a contradiction.
So $G_s-xyz$ is 2-connected.
Now we see that the cycle $C=xycx$ is a non-separating induced odd cycle in $M_0$ with $|E(C,M_0-C)|\geq 4$ (where the oddness follows by the parities of $xc,yc$).
It is also easy to see that $M_0$ is 3-connected and non-bipartite with maximum degree at most $0.2T+1$ and $t(M_0)=t(G_s)=T_s$,
where the only vertex possibly having degree $0.2T+1$ is the vertex $c\in V(C)$.

So it remains to show $f(G_s)\geq f(M_0)$.
We prove this by showing an injection from odd cycles in $M_0$ to odd cycles in $G_s$.
Let $D$ be any odd cycle in $M_0$.
If $D$ contains none of $xc, yc$, then clearly $D$ is also an odd cycle in $G_s$.
If $D$ only contains one of $xc, yc$ (say $xc$),
then replacing $xc$ with $xzc$ in $D$ gives an odd cycle in $G_s$.
Lastly $D$ contains both $xc, yc$.
Since the parity of $xcy$ is the same as the parity of $xzy$,
replacing $xcy$ with $xzy$ in $D$ gives an odd cycle in $G_s$. This proves the claim.
\end{proof}

Adapting notations from Section \ref{sec:3-conn}, let $H=M_0-C$, $t=t(H)$ and $m=|E_{M_0}(C,H)|$.
By Claim \ref{claim:M0}, $T_s=t(M_0)=t+m$ and $m\geq 4$.
Using \eqref{equ:Gs} and $\Delta(M_0)\leq 0.2T+1$, we also can prove the following.

\begin{claim}\label{claim:t>0.6T}
Either $f(M_0)\geq 0.02T^2$, or $m\leq 0.2T$ and $t\geq 0.6T$. In the latter case, we have $M_0\in \dG_T$.
\end{claim}
\begin{proof}
First we show $f(M_0)\geq mt/2$. This holds trivially when $|V(H)|\in \{1,2\}$ (as we have $t=0$).
So $|V(H)|\geq 3$.
If $H$ is 2-connected, then by Lemma \ref{lem: 2-connected H} we get $f(M_0)\geq (t+1)m\geq mt/2$.
So we may assume that $H$ has $k\geq 2$ end-blocks.
Then Lemma \ref{lem:not 2 connected} shows that $f(M_0)\geq(m-k)(t+k)\geq mt/2$, where the last inequality holds because $m\geq 2k$ and thus $m-k\geq m/2$.
This proves $f(M_0)\geq mt/2$.

Let $C=x_1x_2...x_\ell x_1$ and $d_j=|N_H(x_j)|$.
For any two edges $x_ia_i, x_ja_j\in E(C,H)$ with $x_i\neq x_j$,
one can find an $(a_i,a_j)$-path in $H$.
Since $C$ is odd, together with one of the two $(x_i,x_j)$-paths in $C$, this provides an odd cycle\footnote{Recall that such odd cycle is called {\it basic} in Section \ref{sec:3-conn}.} in $M_0$.
Thus $f(M_0)\geq \sum_{i\neq j}d_id_j$.
If $m>0.6T$, since $\Delta(M_0)\leq 0.2T+1$ it is easy to divide $V(C)$ into two sets $X,Y$ such that $\sum_{x_i\in X}d_i\geq 0.2T$ and $\sum_{x_j\in Y}d_j\geq 0.2T$.
Then by Claim \ref{claim:M0}, $f(M_0)\geq(\sum_{x_i\in X}d_i)(\sum_{x_j\in Y}d_j)\geq 0.02 T^2$, completing the proof.
So we have $m\leq 0.6T$.
By \eqref{equ:Gs}, we get $t=T_s-m\geq T_s-0.6T\geq 0.2T$.
Since $0.02T_s^2>f(G_s)\geq f(M_0)\geq mt/2$, it follows that $m\leq \frac{0.04T_s^2}{0.2T}\leq 0.2T$ and then $t\geq T_s-m\geq 0.6T$.
Any vertex in $C$ has degree at most $m\leq 0.2T$ and thus by Claim \ref{claim:M0} we have $\Delta(M_0)\leq 0.2T$.
So $M_0\in \dG_T$. This proves Claim \ref{claim:t>0.6T}.
\end{proof}

Note that $f(G)\geq f(G_s)\geq f(M_0)$. So we may assume that the latter case of Claim \ref{claim:t>0.6T} holds.

Let $\dB$ be the set of all blocks in $H$ and $t_i=t(B_i)$ for each $B_i\in \dB$.
Let $\dT$ be a fixed spanning tree in $H$.
So the restriction of $\dT$ on any block of $H$ is also a tree.
For $a,b\in V(H)$, the unique subpath $a\dT b$ is called the {\it $(a,b)$-skeleton},
while any other $(a,b)$-path in $H$ is called a {\it non-skeleton}.

\begin{claim}\label{claim:B1}
There exists a unique 2-connected block $B_1$ in $H$ with $t_1=t(B_1)>T/2$ and $t-t_1<0.1T$.
\end{claim}
\begin{proof}
This is clear if $H$ is 2-connected by Claim \ref{claim:t>0.6T}. So $H$ is not 2-connected.
For any $B_i, B_j\in \dB$, there exists a path $P$ in the block structure of $H$ between two end-blocks say $D_1, D_2$ in $H$ and passing through $D_1, B_i, B_j, D_2$ in order.
Let the unique cut-vertex of $H$ contained in $D_\ell$ be $c_\ell$ for $\ell\in [2]$,
and let the two cut-vertices of $H$ incident to $B_i$ (respectively, to $B_j$) in $P$ be $\alpha_i, \beta_i$ (respectively, $\alpha_j, \beta_j$).
Since $M_0$ is 3-connected, one can easily find two independent edges $x_\ell y_\ell\in E(C,H)$ with $x_\ell\in V(C)$ and $y_\ell\in V(D_\ell)-c_\ell$ for $\ell\in [2]$.
By Lemma \ref{lem path}, for each $\ell\in \{i,j\}$ there exist $t_\ell$ non-skeleton $(\alpha_\ell,\beta_\ell)$-paths in $B_\ell$.
Using these non-skeletons, plus the $(y_1,\alpha_i)$-, $(\beta_i,\alpha_j)$- and $(\beta_j,y_2)$-skeletons,
one can find $t_it_j$ distinct $(y_1,y_2)$-paths in $H$, each of which yields a basic cycle.
So $f(G)\geq f(G_s)\geq f(M_0)\geq \sum_{B_i,B_j\in \dB} t_it_j$.
By Proposition \ref{prop:t}, $t=\sum_{B_i\in \dB} t_i\geq 0.6T$.
Let $t_1$ be the maximum over $t_i$'s.
If $t_1<0.2T$, then $\{t_i\}$ can be divided into two sets each of which has sum at least $0.2T$, implying that $f(G)\geq 0.04T^2$.
So $t_1\geq 0.2T$.
If $t-t_1\geq 0.1T$, then again $f(G)\geq t_1(t-t_1)\geq 0.02T^2$.
This shows $t_1>t-0.1T\geq 0.5T$, proving the claim.
\end{proof}

Next, we define $M_1$ to be obtained from the signed subgraph $M_0[B_1\cup C]$ by adding a new edge $xb$ for every $xa\in E_{M_0}(C,H-B_1)$ with $x\in V(C)$,
where $b\in V(B_1)$ be the unique cut-vertex separating $a$ and $B_1$ in $H$.
Moreover for every such new edge $xb$, we denote $P_{xb}:=xa\cup a\dT b$ and assign the parity of $xb$ to be the parity of $P_{xb}$.
We point out that $M_1$ is a multi-graph.

\begin{claim}\label{claim:M1}
$M_1$ is a 3-connected non-bipartite signed multi-graph such that $t(M_0)-t(M_1)=t-t_1$ and $f(M_0)-f(M_1)\geq t_1(t-t_1)$.
\end{claim}
\begin{proof}
Since $M_0$ is 3-connected, it is easy to verify that $M_1$ is 3-connected.
By the definition of $M_1$, we have $|E_{M_1}(B_1,C)|=|E_{M_0}(H,C)|$,
which together with Proposition \ref{prop:t} imply that $t(M_0)-t(M_1)=t-t_1$.
We now show that there exists an injection from odd cycles in $M_1$ to odd cycles in $M_0$.
Consider any odd cycle $D$ in $M_1$.
If $D$ does not contain any new edge in $M_1$, then obviously it is an odd cycle in $M_0$.
Suppose $D$ contains new edges in $M_1$.
For a new edge $xb$ which is not incident to any other new edges in $D$, then we can replace $xb$ by the path $P_{xb}$.
If there exists a pair of new edges $xb,yb$ in $D$ with $x,y\in V(C)$ and $b\in V(B_1)$,
then we can replace $xby$ by the symmetric difference of the paths $P_{xb}$ and $P_{yb}$,
which is an $(x,y)$-path in $M_0$ internally disjoint from $V(D)$ and has the same parity as $xby$ in $M_1$.
In this way, using the skeletons in $H$ we obtain a unique odd cycle in $M_0$ from $D$.
This gives the injection $\phi$ from odd cycles in $M_1$ to odd cycles in $M_0$.

Next we show that there are at least $t_1(t-t_1)$ odd cycles in $M_0$ which are distinct from the image of $\phi$.
Indeed, for any block $B_i\in \dB$ with $i\neq 1$, the proof of Claim \ref{claim:B1} provides at least $t_1t_i$ odd cycles in $M_0$
which use non-skeleton paths in $B_1, B_i$ and skeleton paths in other blocks.
Summing over all such blocks $B_i$, we prove that $f(M_0)-f(M_1)\geq t_1(t-t_1)$.
This finishes the proof of Claim \ref{claim:M1}.
\end{proof}

Let $M_2$ be obtained from $M_1$ by contracting the cycle $C$ into a new vertex $x^*$ and keeping all resulting multi-edges.
Given a partition $V(C)=X\cup Y$, let $M_{X,Y}$ be obtained from $M_1$ by contracting $X,Y$ into vertices $x,y$, respectively,
adding one edge $xy$ with parity 1 and keeping all other resulting multi-edges.
Since $C$ is induced, it is easy to see that $t(M_2)=t(M_{X,Y})=t(M_1)-1$.

\begin{claim}\label{claim:M2}
$M_2$ is 3-connected and there exists some $V(C)=X\cup Y$ such that $M_{X,Y}$ is 3-connected.
\end{claim}
\begin{proof}
Suppose that $M_2$ has a 2-cut $\{u,v\}$.
Since $M_1$ is 3-connected, the only possibility is $x^*\in \{u,v\}$, but this contradicts the 2-connectivity of $B_1$.
So $M_2$ is 3-connected.

Next we show that $M_{X,Y}$ is 3-connected if both $x$ and $y$ have at least two distinct neighbors in $B_1$.
Suppose there is a 2-cut $\{u,v\}$ in such $M_{X,Y}$.
Similarly the only possibility (by symmetry) is that $u\in V(B_1)$ and $v=x$.
Since $B_1-u$ is connected, it implies that $y$ has no neighbor in $B_1-u$.
That is, all neighbors of $y$ belong to $\{u,x\}$, a contradiction.

It suffices to show that there exists some $V(C)=X\cup Y$ such that in $M_{X,Y}$ both $x$ and $y$ have at least two distinct neighbors in $B_1$.
If $H$ is not 2-connected, then as in the explanation after Lemma \ref{lem:block tree},
one can define two staple edges for each end-block of $H$ in $M_0$ and thus $H$ has at least four such edges.
Using these four edges and by the definition of $M_1$, it is easy to find such a partition $X\cup Y$ of $V(C)$.
Thus $H$ is 2-connected. So $B_1=H$ and $M_1=M_0$.
By Claim \ref{claim:M0}, we have $|E_{M_1}(C,B_1)|\geq 4$.
In this case, again it is easy to find a desired partition $V(C)=X\cup Y$. This proves Claim \ref{claim:M2}
\end{proof}

Let $M_3$ be a signed multi-graph as following.
If $M_2$ is non-bipartite, then let $M_3=M_2$;
otherwise let $M_3$ be some 3-connected $M_{X,Y}$ guaranteed by Claim \ref{claim:M2}.
By the definition we see that $M_3$ is 3-connected with $t(M_3)=t(M_1)-1$.
Next we show that $M_3$ is also non-bipartite.
It is enough to consider when $M_3=M_{X,Y}$.
In this case, $M_2$ is bipartite, so any cycle in $M_2$ passing through $x^*$ is even.
This also implies that any $(x,y)$-path in $M_3=M_{X,Y}$ (except the edge $xy$) is even.
Since the parity of $xy$ in $M_3$ is one, we see that indeed $M_3$ is non-bipartite.

Finally, we define $G_{s+1}$ to be a underlying graph of $M_3$
(that is, to keep only one edge of each adjacent pair of vertices in $G_{s+1}$) such that it contains at least one odd cycle.
Let $\alpha=t(M_3)-t(G_{s+1})$, which is the number of deleted edges in this process.
Clearly each of the deleted edges corresponds to one in $E_{M_1}(C,B_1)$.
So by Claim \ref{claim:M0}, we have $\alpha\leq m\leq 0.2T$.

\begin{claim}\label{claim:Gs+1}
$G_{s+1}$ is a 3-connected non-bipartite signed graph such that $t(M_1)-t(G_{s+1})=\alpha+1$ and $f(M_1)-f(G_{s+1})\geq t_1(\alpha+1)$.
\end{claim}

\begin{proof}
By definition, it is clear that $G_{s+1}$ is a 3-connected and non-bipartite signed graph such that $t(M_1)-t(G_{s+1})=\alpha+1$ and $t(G_{s+1})\geq t(B_1)=t_1$.

To show $f(M_1)-f(G_{s+1})\geq t_1(\alpha+1)$, we first give an injection $\phi$ from odd cycles in $G_{s+1}$ to odd cycles in $M_1$.
Let $Q$ be any odd cycle in $G_{s+1}$.
In the case $M_3=M_2$, if $x^*\notin V(Q)$, then $Q$ is also an odd cycle in $M_1$;
otherwise $x^*\in V(Q)$, then the two edges in $Q$ incident to $x^*$ have the same end in $C$ or different ones (say $u,v$).
In the former case, $Q$ also corresponds to an odd cycle in $M_1$;
in the latter case, adding the even $(u,v)$-path in $C$ to the preimage of $Q$ in $M_1$ gives a unique odd cycle in $M_1$.
Now consider the case $M_3=M_{X,Y}$.
Since $M_2$ is bipartite, all $(x,y)$-paths in $M_{X,Y}$ (except the edge $xy$) are even and any odd cycle $Q$ in $G_{s+1}$ must use $x$ and $y$.
In fact such $Q$ must use $xy$ (as otherwise one of the two $(x,y)$-paths in $Q$ is odd, a contradiction).
Then again adding one of two paths in $C$ between the ends of the preimage of $Q$ gives a unique odd cycle in $M_1$.
This defines the injection $\phi$.

We now show that there are at least $t_1(\alpha+1)$ odd cycles in $M_1$, which are distinct from the image of $\phi$.
First we consider any edge $e\in E(M_3)\backslash E(G_{s+1})$, which corresponds to an edge $uv$ in $E_{M_1}(C,B_1)$ with $u\in V(C)$.
Since $M_1$ is 3-connected, there exists an edge $u'v'$ in $E(M_1)$ with $u'\in V(C)-u$ and $v'\in V(B_1)-v$.
We can choose $u'v'$ so that it corresponds to an edge in $G_{s+1}$.
Since $B_1$ is 2-connected, by Lemma \ref{lem path} there are at least $t_1$ distinct $(v,v')$-paths in $B_1$.
Adding the edges $uv, u'v'$ and one of the two $(u,u')$-paths in $C$ to the each of these paths gives an odd cycles in $M_1$.
There are $\alpha$ such edges $e$, which provides at least $t_1\alpha$ distinct odd cycles in $M_1$.
Clearly these odd cycles are also distinct from the image of $\phi$.

It remains to show there are other $t_1$ odd cycles in $M_1$ which are distinct from the above ones.
We will prove this by distinguishing among the following three cases.

Suppose that the signed graph $B_1$ is non-bipartite. In this case $M_3=M_2$.
By Lemma \ref{lem_nonseparate}, there exists a non-separating induced odd cycles $D$ in $G_{s+1}$ such that $x^*\notin V(D)$.
Since $M_1$ is also 3-connected, there exist three disjoint paths from $D$ to $C$ in $M_1$,
which yields three internally disjoint paths $R_1,R_2,R_3$ from $D$ to $x^*$ in $G_{s+1}$.
To apply Lemma \ref{lem:EarDecom-Fix-Vertex}, we define an edge-coloring $f$,
which assigns every edge $x^*y$ in $G_{s+1}$ by the color $x_i\in V(C)$, where $x_iy$ is the preimage of $x^*y$ in $M_1$.
Clearly, the three edges of $R_1, R_2, R_3$ incident to $x^*$ have distinct colors assigned by this $f$.
By Lemma \ref{lem:EarDecom-Fix-Vertex} (with $G=G_{s+1}$),
$G_{s+1}$ contains at least $t(G_{s+1})\geq t_1$ even cycles passing through $x^*$ such that the two edges incident to $x^*$ in every such cycle have different colors assigned by $f$.
The preimage of every such cycle is an even path with two different ends in $C$.
Since $C$ is odd, adding the odd path of $C$ between the two ends to this preimage results in an odd cycle in $M_1$.
It is easy to see that these odd cycles are distinct from the odd cycles in $M_1$ found above.
So in this case $f(M_1)-f(G_{s+1})\geq t_1(\alpha+1)$.

Now suppose that $B_1$ is bipartite but $M_2$ is non-bipartite. Again in this case we have $M_3=M_2$.
By Proposition \ref{prop:bip}, there exists a bipartition $V(B_1)=I\cup J$
such that each $e\in E(I,J)$ is odd and each $e\in E(B_1)\backslash E(I,J)$ is even.
Since $M_1$ is 3-connected, there exist three independent edges say $x_ia_i$ in $E_{M_1}(C,B_1)$ with $x_i\in V(C)$ for $i\in [3]$,
which correspond to three edges $x^*a_i$ in $G_{s+1}$ for $i\in [3]$.
Then we can find two vertices say $a_1,a_2$ such that
either $x^*a_1,x^*a_2$ have the same parity and $a_1,a_2$ belong to the same part,
or $x^*a_1,x^*a_2$ have the opposite parity and $a_1,a_2$ belong to the different parts.
Since $B_1$ is 2-connected, by Lemma \ref{lem path} there are $t_1$ distinct $(a_1,a_2)$-paths in $B_1$.
By our choice, these paths give at least $t_1$ even cycles in $G_{s+1}$ passing through $x^*$ (by adding $x^*a_1, x^*a_2$)
and at least $t_1$ odd cycles in $M_1$ (by adding $x_1a_1, x_2a_2$ and the unique odd $(x_1,x_2)$-path of $C$).
This also proves $f(M_1)-f(G_{s+1})\geq t_1(\alpha+1)$.

Lastly we consider the case that $M_2$ is bipartite. Then $M_3=M_{X,Y}$.
As $M_1$ is 3-connected, there are three independent edges $x_ia_i$ in $E_{M_1}(C,B_1)$ for $i\in[3]$.
Now two of them are incident with one of $x,y$ (say they are $xa_1,xa_2\in E(G_{s+1})$).
By Lemma \ref{lem path} there are at least $t_1$ distinct $(a_1,a_2)$-paths in $B_1$.
Since $M_2$ is bipartite, adding $xa_1,xa_2$ to these paths result in at least $t_1$ even cycles in $G_{s+1}$ passing through $x$.
On the other hand, adding $x_1a_1,x_2a_2$ and the unique odd $(x_1,x_2)$-path in $C$ will give at least $t_1$ odd cycles in $M_1$,
which are distinct from the image of $\phi$ as well as these odd cycle raised from edges in $E(M_3)\backslash E(G_{s+1})$.
This completes the proof of Claim \ref{claim:Gs+1}.
\end{proof}

To conclude this proof, we now show that $G_{s+1}$ satisfies the propositions (\romannumeral1) and (\romannumeral2).
Let $T_{s+1}=t(G_{s+1})$.
Combining the claims \ref{claim:M0}, \ref{claim:M1} and \ref{claim:Gs+1},
we get $T_s-T_{s+1}=t-t_1+\alpha+1$ and $f(G_s)-f(G_{s+1})\geq t_1(T_s-T_{s+1})$.
By Claim \ref{claim:B1}, $t_1>T/2$ and $0\leq t-t_1<0.1T$.
Also we have $\alpha\leq m\leq 0.2T$.
Thus it follows that $1\leq T_s-T_{s+1}\leq 0.4T$ and $f(G_s)-f(G_{s+1})\geq \frac12 T\cdot (T_s-T_{s+1})$.
This proves (\romannumeral2).

To prove (\romannumeral1), it suffices to show that the maximum degree $\Delta(G_{s+1})$ is at most $0.2T$.
By Claim \ref{claim:t>0.6T}, $\Delta(M_0)\leq 0.2T$ and $m\leq 0.2T$.
So each of the new vertices $x^*, x, y$ has degree at most $m\leq 0.2T$ in $G_{s+1}$.
In the case $M_3=M_2$, suppose there exists some $u\in V(B_1)$ with $d_{G_{s+1}}(u)>|N_{M_0}(u)\cap (C\cup B_1)|$.
Then $u$ must be a cut-vertex in $H$ and $d_{G_{s+1}}(u)=|N_{M_0}(u)\cap (C\cup B_1)|+1\leq d_{M_0}(u)\leq 0.2T$.
This shows that $\Delta(G_{s+1})\leq 0.2T$ when $M_3=M_2$.
Now let us assume $M_3=M_{X,Y}$.
By the similar arguments as above, one can derive that $\Delta(G_{s+1})\leq 0.2T+1$ and
if $u\in V(G_{s+1})$ has degree $0.2T+1$ in $G_{s+1}$, then $u\in V(B_1)$ is adjacent to both $x$ and $y$.
Note that in this case $M_2$ is bipartite,
so the parity of the path $xuy$ is even.
Since the parity of $xy$ is 1 and $B_1$ is 2-connected,
the cycle $C'=xuyx$ is a non-separating induced odd cycle in $G_{s+1}$.
Applying Claim \ref{claim:t>0.6T} with $M_0=G_{s+1}$ (note that in the proof of this claim we also make sure of $\Delta(M_0)\leq 0.2T+1$),
either $f(G)\geq f(G_{s+1})\geq 0.02T^2$, or $d_{G_{s+1}}(u)\leq |E(C',G_{s+1}-C')|\leq 0.2T$ for every such $u$.
So we may assume that the latter case occurs and thus $\Delta(G_{s+1})\leq 0.2T$.
This finishes the proof of Theorem \ref{thm:3-con-bound-maxdeg}.
\end{proof}

Now we are ready to prove Theorem \ref{thm:G=3-con}.

\medskip

{\noindent \bf Proof of Theorem \ref{thm:G=3-con}.}
Let $G$ be a 3-connected non-bipartite graph.
If $\Delta(G)\leq 0.2t(G)$, then by Theorem \ref{thm:3-con-bound-maxdeg}, we have $f(G)\geq 0.02t^2(G)$.
So we may assume that there is a vertex $x$ of degree at least $0.2t(G)+1$.
Suppose there exists an odd cycle $C$ in $G\backslash x$.
For any distinct $a,b\in N(x)$, as $G\backslash x$ is 2-connected,
there are two disjoint paths from $\{a,b\}$ to $u,v\in V(C)$, which together with one of the two $(u,v)$-paths in $C$ give an odd $(a,b)$-path in $G\backslash x$.
Thus $f(G)\geq \binom{d(x)}{2}\geq 0.02t^2(G)$.

Now it is fair to assume that $G\backslash x$ is bipartite with parts $A,B$.
Let $T=t(G)$, $t=t(G\backslash x)$, $d_1=|N(x)\cap A|$ and $d_2=|N(x)\cap B|$.
Since $G$ is 3-connected and non-bipartite, $G[A\cup B]=G\backslash x$ is 2-connected and we may assume $d_1\geq d_2\geq 1$.
This implies that $d_1\geq d(x)/2\geq 0.1T$.
By Lemma \ref{lem path} there are at least $t+1$ paths in $G\backslash x$ between any vertex in $N(x)\cap A$ and any vertex in $N(x)\cap B$, all of which have odd lengths.
Thus $f(G)\geq d_1d_2(t+1)\geq d_1(d_2+t)$.
Note that we have $T+1=d_1+d_2+t$ and $d_1\geq 0.1T$.
If $d_2+t\geq d_1$, then $f(G)\geq d_1(d_2+t)\geq 0.09T^2$, as desired.
So we may assume that $d_1\geq d_2+t$.
By the same analysis, we may further assume that $d_2+t\leq 0.1T$ and $d_1\geq 0.9T$.

So $n-1\geq d(x)\geq d_1\geq 0.9T$.
Let $B_i$ be the set of vertices in $B$ of degree $i$ in $G\backslash x$ for $i\geq 2$.
Since $G$ is 3-connected, we have $d_2\geq |B_2|$ and $e(A,B)\geq 2|A|$.
Also $e(A,B)=\sum_{i\geq 2} i|B_i|$,
so $$2t\geq 2(e(A,B)-|A|-|B|)\geq e(A,B)-2|B|=\sum_{i\geq 2} i|B_i|-2\sum_{i\geq 2}|B_i|=\sum_{i\geq 3} (i-2)|B_i|.$$
Thus using $2|A|\leq e(A,B)=\sum_{i\geq 2} i|B_i|$, we get $2(|A|-|B|)\leq \sum_{i\geq 3} (i-2)|B_i|\leq 2t$.
Now we have
$$2d_2+4t\geq 2|B|=(|A|+|B|)-(|A|-|B|)\geq n-1-t\geq 0.9T-t,$$
which implies that $2d_2+5t\geq 0.9T$, a contradiction to $d_2+t\leq 0.1T$.
This proves Theorem \ref{thm:G=3-con}.
\qed

\section{Proof of Theorem \ref{Thm:4-cri-str}}
We prove this by induction on the number of vertices.
The base case $G=K_4$ is clear. Let $G$ be a 4-critical graph.
If $G$ is 3-connected, then this follows by Theorem \ref{thm:G=3-con}.
So there exists some 2-cut $\{x,y\}$ in $G$.
By Lemma \ref{lem:2-cut}, $xy\notin E(G)$ and there are unique proper induced subgraphs $G_1, G_2$ of $G$ such that $G=G_1\cup G_2$ and $V(G_1)\cap V(G_2)=\{u,v\}$.
We choose a 2-cut $\{x,y\}$ such that $G_1$ has the minimum order among all choices.
By the minimality we see that $G_1+xy$ is 3-connected.
By Lemma \ref{lem:2-cut} again either (1) $H_1:=G_1+xy$ and $H_2:=G_2/\{x,y\}$ are 4-critical or (2) $H_1:=G_1/\{x,y\}$ and $H_2:=G_2+xy$ are $4$-critical.
In either case, we have $t(H_i)=t(G_i)+1$ for each $i\in [2]$ and $t(G)+1=t(H_1)+t(H_2)$.
By induction, $f(H_i)\geq 0.02t^2(H_i)$ for each $i\in [2]$.

Suppose (1) occurs.
Fix an $(x,y)$-path $P_1$ in $G_1$ of even length.
Any odd cycle in $H_2$ becomes either an odd cycle or an odd $(x,y)$-path in $G_2$.
In the latter case, concatenating with $P_1$ gives an odd cycle in $G$.
So we get $0.02t^2(H_2)$ distinct odd cycles in $G$ from $H_2$.
Also fix an $(x,y)$-path $P_2$ in $G_2$ of odd length (such path is easy to see).
By similar augments, concatenating with $P_2$ if needed, we get $0.02t^2(H_1)$ odd cycles in $G$ from $H_1$.
Next we combine $(x,y)$-paths in $G_1$ and $G_2$ (but not using $P_1,P_2$) to get more odd cycles in $G$.
Since $G_1+xy$ is 3-connected and 4-critical,
by Lemma \ref{lem:EarDecom-Fix-Edge},
there are at least $t(G_1+xy)-1=t(G_1)$ distinct $(x,y)$-paths (except the edge $xy$) of each parity in $G_1+xy$ (thus in $G_1$).
By Lemma \ref{lem path}, since $G_2+xy$ is 2-connected,
there are at least $t(G_2+xy)=t(G_2)+1$ distinct $(x,y)$-paths (except the edge $xy$) in $G_2$.
Thus for every such path (except $P_2$) in $G_2$,
there are at least $t(G_1)-1$ distinct $(x,y)$-paths (excluding $P_1$) in $G_1$ of opposite parity.
This yields at least $t(G_2)(t(G_1)-1)$ odd cycles in $G$, all of which are distinct from the above ones derived from $H_1$ and $H_2$.
Summing up, we get
$$f(G)\geq 0.02t^2(H_1)+0.02t^2(H_2)+t(G_2)(t(G_1)-1)\geq 0.02t^2(G).$$
Now suppose (2) occurs. In this case $H_1=G_1/\{x,y\}$ is 4-critical.
So both $(G_1+xy)-x$ and $(G_1+xy)-y$ are non-bipartite.
Recall that $G_1+xy$ is 3-connected.
By Lemma \ref{lem:EarDecom-Fix-Edge}, there are at least $t(G_1+xy)-1=t(G_1)$ distinct $(x,y)$-paths (except the edge $xy$) of each parity in $G_1+xy$.
By similar analysis as above, we also can derive that
$f(G)\geq 0.02t^2(H_1)+0.02t^2(H_2)+t(G_2)(t(G_1)-1)\geq 0.02t^2(G).$
This completes the proof of Theorem \ref{Thm:4-cri-str}.
\qed

\section{Concluding remarks}
In this paper we consider a problem of Gallai from 1984 which asks whether for $k\geq 4$
the number of distinct $(k-1)$-critical subgraphs in any $k$-critical graph is at least the order of the graph $n$.
For general $k$, we improve a longstanding lower bound on this number proved by Abbott and Zhou \cite{AZ} since 1995.
In the case $k=4$ -- the main focus of this paper, we show this number is at least $\Omega(n^2)$,
which is tight up to the constant factor by infinitely many 4-critical graphs.
In addition, we give a very short proof to Gallai's problem for $k=4$ (by a different approach from \cite{H19}).
Along the way to obtain these, we developed some tools for counting cycles with specified parity and passing through some fixed vertex or edge
(see Lemmas \ref{lem:EarDecom-Fix-Vertex} and \ref{lem:EarDecom-Fix-Edge});
a key ingredient in these lemmas is a novel application of the ear-decomposition together with the use of non-separating cycles.
For the needs of the approach, we also consider and establish the analogous results in signed graphs,
which may be of interest on its own.

In relation to the results provided here, besides the original problem of Gallai, there are many interesting problems one can ask for.
One may wonder if Theorem \ref{thm:G=3-con} also can be extended to the setting of signed graphs.
However, unlike Theorem \ref{thm:3-con-bound-maxdeg}, the following example shows in negative.

\begin{constr}
Assume that $(A,B)$ is a bipartition of an even cycle $C_{2n}$.
Let $H$ be obtained from this $C_{2n}$ by adding a vertex $x$ and edges $xu$ for all $u\in A\cup B$.
Fix a vertex $b\in B$.
Assign $0$ to edges $xu$ for all $u\in B-\{b\}$ and assign $1$ to all edges in $C_{2n}$ and edges $xu$ for all $u\in A\cup \{b\}$.
\end{constr}

\noindent It is not hard to see that $H$ is a 3-connected non-bipartite signed graph,
every odd cycle in $H$ passes through the edge $xb$ and thus $H$ contains at most $2t(H)$ odd cycles.
This also explains that it is needed to bound the maximum degree in Theorem \ref{thm:3-con-bound-maxdeg}.

In Theorem \ref{Thm:4-cri-str} we prove that $\min f_3(G)=\Theta(n^2)$, where the minimum is over all $n$-vertex 4-critical graphs $G$.
This oversteps the original linear bound asked by Gallai in the case $k=4$.
The following problem seems natural to ask.
\begin{prob}
Determine the order of the magnitude of $\min f_{k-1}(G)$ over all $n$-vertex $k$-critical graphs $G$ for all $k\geq 5$.
\end{prob}

\noindent It is of particular interest to consider the above minimum for all $n$-vertex 3-connected $k$-critical graphs.
We are not sure if the additional 3-connectivity condition will change the magnitude of the minimum for $k\geq 5$, which would also be interesting to know.
In the case of $k=4$, we know the additional 3-connectivity condition does not change much, as there are 4-critical $n$-vertex graphs in both cases (3-connected or not) with $O(n^2)$ distinct odd cycles.

Let $k\geq 4$. We would like to emphasise here that in this paper, all results on 4-critical graphs can be easily extended to $k$-critical graphs.
The reason is that the only structural property we used for 4-critical graphs is Lemma \ref{lem:2-cut}, which also holds for all $k$-critical graphs.
For instance, Theorem \ref{Thm:4-cri-str} can be restated as that any $n$-vertex $k$-critical graphs $G$ has at least $0.02t^2(G)\geq \Omega(n^2)$ distinct odd cycles.
We believe a better bound on the number of odd cycles should hold for $k\geq 5$.

\begin{prob}\label{prob:k-critical->odd cycle}
Determine the order of the magnitude of the minimum number of distinct odd cycles over all $n$-vertex $k$-critical graphs for all $k\geq 5$.
\end{prob}

\noindent It is easy to see that such number must be a polynomial function of $n$. 

Lastly we point out that the lemmas in Sections \ref{sec:3-conn} and \ref{sec:ear-decom} also can yield the same number of distinct even cycles in the circumstances therein.
Hence one can derive the following for even cycles.

\begin{thm}
Let $G$ be a graph which is either 4-critical or 3-connected. Then $G$ contains at least $\Omega(t^2(G))$ distinct even cycles.
\end{thm}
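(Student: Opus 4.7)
My plan is to mirror the odd-cycle proofs of Theorems \ref{thm:G=3-con} and \ref{Thm:4-cri-str} in a parity-symmetric way. The two facts that make this possible are already highlighted in the paper: each basic odd cycle produced in Lemmas \ref{lem: 2-connected H} and \ref{lem:not 2 connected} pairs canonically with a basic even cycle (use the complementary $(x,y)$-arc of the reference odd cycle $C$), and Lemmas \ref{lem:EarDecom-Fix-Vertex} and \ref{lem:EarDecom-Fix-Edge} already deliver cycles and paths of \emph{each} parity simultaneously. Hence every quantitative count of odd cycles in Sections \ref{sec:3-conn} and \ref{sec:ear-decom} becomes, mutatis mutandis, a count of even cycles.

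For a 3-connected non-bipartite $G$, I would rerun the proof of Theorem \ref{thm:3-con-bound-maxdeg} and its reduction to Theorem \ref{thm:G=3-con} with every ``odd cycle'' replaced by ``even cycle'' in the counting steps. The recursive sequence $G_0,\dots,G_q$ together with the intermediate multigraphs $M_0,\dots,M_3$ and Claims \ref{claim:M0}--\ref{claim:Gs+1} carry over verbatim once the parity-flipping observations above are applied to each invocation of Lemmas \ref{lem: 2-connected H}, \ref{lem:not 2 connected}, \ref{lem:EarDecom-Fix-Vertex}, and \ref{lem:EarDecom-Fix-Edge}. The large-maximum-degree case in the proof of Theorem \ref{thm:G=3-con} adapts the same way: an odd cycle in $G-x$ together with two disjoint paths from $\{a,b\}\subseteq N(x)$ yields an even $(a,b)$-path by choosing the appropriate arc, producing $\binom{d(x)}{2}$ even cycles through $x$, and the bipartite-$G-x$ subcase yields $d_1 d_2 (t+1)$ even cycles via Lemma \ref{lem path}.

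For 4-critical $G$, I would induct on $|V(G)|$ following the proof of Theorem \ref{Thm:4-cri-str}, observing that even cycles across a 2-cut $\{x,y\}$ arise by concatenating \emph{same-parity} $(x,y)$-paths from $G_1$ and $G_2$ (rather than opposite-parity ones). The existence of fixed reference paths of both parities in each $G_i$ is guaranteed by the argument already used in the proof of Theorem \ref{Thm:4-cri-weak}, and Lemma \ref{lem:EarDecom-Fix-Edge} still supplies $t(G_1)$ paths of each parity in $G_1$, yielding the cross-term $t(G_2)(t(G_1)-1)$ unchanged. The base case $G=K_4$ has three even $4$-cycles, which handles the constant.

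The one genuinely new case is a 3-connected \emph{bipartite} $G$, for which Lemma \ref{lem_nonseparate} is vacuous. Here every cycle is already even, so the parity bookkeeping collapses entirely: I would replace the initial call to Lemma \ref{lem_nonseparate} by Tutte's classical theorem on non-separating induced cycles in 3-connected graphs to obtain a non-separating induced (necessarily even) cycle $C$, and then run the parity-free version of the proof of Theorem \ref{thm:3-con-bound-maxdeg} to count cycles outright. The main obstacle in the whole plan is a careful verification that the three subcases of Claim \ref{claim:Gs+1} (``$B_1$ non-bipartite'', ``$B_1$ bipartite but $M_2$ non-bipartite'', ``$M_2$ bipartite'') all survive the odd-to-even substitution and still produce the requisite $t_1$ additional even cycles; this reduces to a routine rewrite using the parity-symmetry of Lemmas \ref{lem:EarDecom-Fix-Vertex} and \ref{lem:EarDecom-Fix-Edge}.
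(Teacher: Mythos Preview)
Your plan for the non-bipartite 3-connected case and the 4-critical case coincides with the paper's own sketch: rerun Theorems \ref{thm:G=3-con} and \ref{Thm:4-cri-str} using the parity-symmetry built into the basic cycles and into Lemmas \ref{lem:EarDecom-Fix-Vertex} and \ref{lem:EarDecom-Fix-Edge}.

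One slip: in the large-degree subcase of Theorem \ref{thm:G=3-con} where $G-x$ is bipartite with parts $A,B$, an $(a,b)$-path in $G-x$ with $a\in A$, $b\in B$ has \emph{odd} length, so adjoining $xa,xb$ produces an odd cycle, not an even one. To count even cycles through $x$ you must take both $a,b$ in the same part, giving the lower bound $\bigl(\binom{d_1}{2}+\binom{d_2}{2}\bigr)(t+1)$ rather than $d_1d_2(t+1)$. Since $d_1\ge 0.1T$ in that argument, $\binom{d_1}{2}\ge \Omega(T^2)$ and the conclusion still goes through, so the repair is immediate.

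For the 3-connected \emph{bipartite} case you invoke Tutte's non-separating-cycle theorem and propose to run a ``parity-free'' version of the whole machinery of Theorem \ref{thm:3-con-bound-maxdeg}. This can be made to work, but it is far heavier than what the paper does, and you would have to check that the recursive construction of $G_{s+1}$ (in particular the branching definition of $M_3$, which depends on whether the signed contraction $M_2$ is bipartite) still makes sense when the reference cycle $C$ is even. The paper instead dispatches this case in one line: every cycle is already even, so take any ear-decomposition $p_1\cup\cdots\cup p_t$ of $G$; adding ear $p_i$ to the 2-connected graph $G_{i-1}=p_1\cup\cdots\cup p_{i-1}$ creates at least $t(G_{i-1})+1=i$ new cycles by Lemma \ref{lem path}, and summing over $i$ gives $\binom{t(G)+1}{2}=\Omega(t^2(G))$ cycles. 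Your route is correct in spirit but unnecessarily elaborate here.
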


\noindent We give a sketch for its proof as follows. If such $G$ is bipartite, then it holds easily by a recursive use of Lemma \ref{lem path} in any ear-decomposition of $G$.
Otherwise $G$ is either 3-connected non-bipartite or 4-critical, then it follows by analogous proofs as in Theorems \ref{thm:G=3-con} and \ref{Thm:4-cri-str}.
This bound is also tight up to the constant factor, as indicated by (even and odd) wheels $W(n,1)$, which are 3-connected too.

One can ask for the analog of Problem \ref{prob:k-critical->odd cycle} for even cycles as well.
For more problems on $k$-critical graphs, we refer to the book \cite{JT} by Jensen and Toft.

\medskip

\bigskip

\noindent {\bf Acknowledgement.}
We would like to thank Asaf Shapira for providing counterexamples to some problems we asked in an earlier version of this paper.

\bibliographystyle{unsrt}

\end{document}